\tikzset{
modal/.style={>=stealth',shorten >=1pt,shorten <=1pt,auto,
node distance=1.5cm,semithick},
world/.style={circle,draw,minimum size=1cm},
point/.style={circle,draw,fill=black,inner sep=0.5mm},
reflexive/.style={->,in=120,out=60,loop,looseness=#1},
reflexive/.default={5},
reflexive point/.style={->,in=135,out=45,loop,looseness=#1},
reflexive point/.default={25},
}
\newtheorem{theorem}{Theorem}[section]
\newtheorem{proposition}{Proposition}[section]
\theoremstyle{definition}
\newtheorem{definition}{Definition}[section]
\theoremstyle{remark}
\newtheorem{remark}{Remark}[section]
\newtheorem{convention}{Convention}[section]
\newcommand{\BD}{\mathsf{BD}}
\newcommand{\FDE}{\mathbf{FDE}}
\newcommand{\KFDE}{\mathbf{K}_\FDE}
\newcommand{\KBD}{\BD^\Box}
\newcommand{\ignoBD}{\BD^\mathbf{I}}
\newcommand{\knowBD}{\BD^\blacksquare}
\newcommand{\true}{\mathbf{T}}
\newcommand{\both}{\mathbf{B}}
\newcommand{\neither}{\mathbf{N}}
\newcommand{\false}{\mathbf{F}}
\newcommand{\Var}{\mathsf{Var}}
\newcommand{\Lall}{\mathscr{L}_{\Box,\blacksquare,\mathbf{I}}}
\newcommand{\LBox}{\mathscr{L}_\Box}
\newcommand{\Lknow}{\mathscr{L}_\blacksquare}
\newcommand{\Ligno}{\mathscr{L}_\mathbf{I}}
\newcommand{\Tknowigno}{\mathcal{AC}_{\blacksquare,\mathbf{I}}}
\newcommand{\ltrue}{\mathfrak{t}}
\newcommand{\lfalse}{\mathfrak{f}}
\newcommand{\lnontrue}{\overline{\mathfrak{t}}}
\newcommand{\lnonfalse}{\overline{\mathfrak{f}}}
\begin{document}
\providecommand{\keywords}[1]{\small\textbf{Keywords: } #1}
\title{Knowledge and ignorance in Belnap--Dunn logic\thanks{The research of the first author was funded by the grant ANR JCJC 2019, project PRELAP (ANR-19-CE48-0006). The authors wish to thank two anonymous reviewers for their helpful comments and remarks.}}
\author[1]{Daniil Kozhemiachenko}
\affil[1]{INSA Centre Val de Loire, Univ.\ Orl\'{e}ans, LIFO EA 4022, France\\\href{mailto:daniil.kozhemiachenko@insa-cvl.fr}{daniil.kozhemiachenko@insa-cvl.fr} (corresponding author)}
\author[2]{Liubov Vashentseva}
\affil[2]{Department of Logic, Faculty of Philosophy, Lomonosov Moscow State University, Moscow 119991, Russia\\\href{mailto:vashentsevaliubov@gmail.com}{vashentsevaliubov@gmail.com}}
\maketitle
\begin{abstract}
In this paper, we argue that the usual approach to modelling knowledge and belief with the necessity modality $\Box$ does not produce intuitive outcomes in the framework of the Belnap--Dunn logic ($\BD$, alias $\FDE$ --- first-degree entailment). We then motivate and introduce a~non\-standard modality $\blacksquare$ that formalises knowledge and belief in $\BD$ and use $\blacksquare$ to define $\bullet$ and $\blacktriangledown$ that formalise the \emph{unknown truth} and ignorance as \emph{not knowing whether}, respectively. Moreover, we introduce another modality $\mathbf{I}$ that stands for \emph{factive ignorance} and show its connection with $\blacksquare$.

We equip these modalities with Kripke-frame-based semantics and construct a~sound and complete analytic cut system for $\knowBD$ and $\ignoBD$ --- the expansions of $\BD$ with $\blacksquare$ and $\mathbf{I}$. In addition, we show that $\Box$ as it is customarily defined in $\BD$ cannot define any of the introduced modalities, nor, conversely, neither $\blacksquare$ nor $\mathbf{I}$ can define $\Box$. We also demonstrate that $\blacksquare$ and $\mathbf{I}$ are not interdefinable and establish the definability of several important classes of frames using~$\blacksquare$.

\vspace{.5em}

\noindent
\keywords{Belnap--Dunn logic; non-standard modalities; factive ignorance; knowledge whether; expressivity; analytic cut.}
\end{abstract}
\section{Introduction\label{sec:introduction}}
Formalising epistemic and doxastic contexts using classical modal logics can produce counter-intuitive outcomes. For example, if $\Box\phi$ is interpreted as “the agent believes in $\phi$”\footnote{It is customary in doxastic and epistemic logics to use $\mathbf{K}$ for the knowledge modality and $\both$ for the belief modality. We do not follow this tradition as the only difference between knowledge and belief modalities that we consider is that they are defined over different classes of frames. The semantics of the modalities themselves is the same and they exhibit the same ‘box-like’ behaviour.}, then $\Box(p\wedge\neg p)\rightarrow\Box q$ is valid in every regular classical modal logic. This means an agent cannot believe in a~contradictory statement without believing in every proposition.

Similarly, if we interpret $\Box$ as a~(classical) knowledge operator, it usually entails the knowability paradox that states that there is no unknown truth. This happens because an instance of \emph{reductio ad absurdum} is used in the derivation (cf., e.g.,~\cite[\S2]{BrogaardSalerno2019}). In addition, since (in normal logics) necessitation is sound, the agents are omniscient because they know every valid formula. In particular, $\Box\neg(p\wedge\neg p)$ is valid which means that the agent is supposed to know that every contradiction is false. However, if we hold that an agent's knowledge must be built upon the facts they have at hand, we might not be inclined to accept the validity of $\Box\neg(p\wedge\neg p)$ because it may happen that the agent has \emph{no information at all} regarding $p$.

These issues pose a~problem not only for the formalisation of knowledge or belief but also for the formalisation of ignorance because it is usually defined as the \emph{lack of knowledge} (“standard view”) or the \emph{lack of true belief} (“new view”; cf.~a~detailed discussion of these two approaches in~\cite{leMorvanPeels2016}). On the other hand, \emph{paraconsistent modal logics} usually do not suffer from the described drawbacks as \emph{reductio ad absurdum} is not valid, and thus can be more intuitive when it comes to the formalisation of belief, knowledge, and ignorance.
\paragraph{Ignorance and not knowing the truth}
The standard view of ignorance has been extensively criticised (cf.,~e.g.,~\cite[\S2.1]{KubyshkinaPetrolo2021} for an overview of the arguments), in particular, because if we assume it (and classical logic), the agents will be ignorant of every (classically) unsatisfiable formula (as indeed, $\neg\Box\phi$ is a~theorem of every modal logic extending $\mathbf{KD}$ when $\phi$ is unsatisfiable). This issue is addressed by assuming that the agent is ignorant of $\phi$ when $\phi$ is true but the agent believes that it is false. A~(classical) logic formalising this treatment of ignorance was proposed in~\cite{KubyshkinaPetrolo2021}.

Another approach to defining ignorance was proposed in~\cite{vanderHoekLomuscio2004} and~\cite{Steinsvold2008}. There the authors interpret “the agent is ignorant about $\phi$” as “the agent does not know whether $\phi$ is true”, i.e., $\neg(\Box\phi\vee\Box\neg\phi)$. Thus, ignorance is treated as contingency modality $\triangledown$ introduced in~\cite{MontgomeryRoutley1966} and then explored in, e.g.,~\cite{Humberstone1995,Zolin1999}.\footnote{Note that it is more customary to treat $\triangle$ (it is \emph{non-contingent} that or \emph{the agent knows whether} the given proposition is true) as the basic operator and define $\triangledown\phi$ as $\neg\triangle\phi$.} An epistemic interpretation of $\triangle$ (knowing whether) was further investigated in~\cite{FanWangvanDitmarsch2015}.

In addition to ignorance, it is worth mentioning the operator “$\phi$ is an unknown truth” ($\phi\wedge\neg\Box\phi$) studied in~\cite{Steinsvold2008}. Note that while the agent does not have a~true belief regarding~$\phi$, the unknown truth does not conform to the new view of ignorance (cf.~\cite{Peels2011} and~\cite[\S2.2]{KubyshkinaPetrolo2021} for a~detailed discussion). In this framework, unknown truth is defined as an accidence operator $\bullet$ introduced in~\cite{Fine1995}\footnote{Again, just as with $\triangledown$, $\bullet\phi$ ($\phi$ is accidental) is treated as a~shorthand for $\neg\circ\phi$ ($\phi$ is not essential).} and further studied in~\cite{Fine2000} and~\cite{Marcos2005essenceaccident}. 
\paragraph{Modal expansions of the Belnap--Dunn logic} Belnap--Dunn logic ($\BD$, alias
First Degree Entailment --- $\FDE$) is a~paraconsistent logic over the $\{\neg,\wedge,\vee\}$ language formulated by Dunn and Belnap in a~series of papers~\cite{Dunn1976,Belnap1977computer,Belnap1977fourvalued}. Semantically, $\BD$ retains the semantical conditions of truth and falsity of $\{\neg,\wedge,\vee\}$-formulas from classical logic but treats them independently (cf.\ Table~\ref{table:BDinformal}).
\begin{table}
\centering
\begin{tabular}{c|c|c}
&\textbf{is true when}&\textbf{is false when}\\\hline
$\neg\phi$&$\phi$ is false&$\phi$ is true\\
$\phi_1\wedge\phi_2$&$\phi_1$ and $\phi_2$ are true&$\phi_1$ is false or $\phi_2$ is false\\
$\phi_1\vee\phi_2$&$\phi_1$ is true or $\phi_2$ is true&$\phi_1$ and $\phi_2$ are false
\end{tabular}
\caption{Truth and falsity conditions of $\BD$-formulas. Note that a formula can also be \emph{both true and false} and \emph{neither true nor false} as truth and falsity conditions are independent.}
\label{table:BDinformal}
\end{table}
Thus, any proposition $\phi$ can have one value (be either exactly true\footnote{Henceforth, when we deal with $\BD$ and its expansions, we reserve the word “true” to mean “at least true”; we use “exactly true” to stand for “true and non-false”. A similar convention is applied for “false” and “exactly false”.} or exactly false), both values (i.e., a~truth-value “glut” --- both true and false) or no value (a~truth-value “gap” --- neither true nor false). It is easy to see from Table~\ref{table:BDinformal} that if all variables of $\phi$ are both true and false, then $\phi$ itself is both true and false. Likewise, if all variables are neither true nor false, then so is $\phi$. Nevertheless, the validity can be defined for sequents of the form $\phi\vdash\chi$ where $\phi$ and $\chi$ are formulas in the $\{\neg,\wedge,\vee\}$ language: $\phi\vdash\chi$ is valid if whenever $\phi$ is true, then $\chi$ is true as well.

$\BD$ has well-studied modal expansions with standard $\Box$- and $\lozenge$-like modalities (cf.~e.g.~\cite{Priest2008FromIftoIs,Priest2008,OdintsovWansing2017,Drobyshevich2020}). They usually employ frame semantics and use either Hilbert-style or tableaux calculi for their proof theory. There is also work on the correspondence theory for expansions of $\BD$ with $\Box$ modality and (or) some implication (cf., e.g.~\cite{RivieccioJungJansana2017,Drobyshevich2020}). In this approach, $\Box\phi$ is defined in the expected manner: $\Box\phi$ is true at $w$ iff $\phi$ is true in all accessible states; $\Box\phi$ is false at $w$ iff there is an accessible state where $\phi$ is false.

Non-standard modal expansions of $\BD$ have also been recently studied. In particular, the “classicality” operator was introduced in~\cite{AntunesCarnielliKapsnerRodriguez2020} and a~non-contingency operator $\blacktriangle\phi$ interpreted “the agent knows whether $\phi$ is true” in the epistemic contexts was proposed in~\cite{KozhemiachenkoVashentseva2023}. On the other hand, there are no (as far as the authors are aware) studies of expansions of $\BD$ with accidence or ignorance operators.
\paragraph{Plan of the paper}
The remainder of the text is organised as follows. In Section~\ref{sec:preliminaries}, we argue that the $\Box$ modality does not align well with the intuitive understanding of belief and knowledge in the framework of $\BD$. We also introduce a~non-standard modality $\blacksquare$ and provide motivation for its use as a~more suitable belief or knowledge operator). Using the intuition behind $\blacksquare$, we then show how to define the ignorance modality $\mathbf{I}$.

Section~\ref{sec:logics} is dedicated to the formal presentation of $\ignoBD$ and $\knowBD$ --- the expansions of $\BD$ with $\mathbf{I}$ and $\blacksquare$, respectively. We provide their Kripke semantics and show that $\blacktriangledown$, $\blacktriangle$ (as proposed in~\cite{KozhemiachenkoVashentseva2023}), and $\bullet$ can be defined from $\blacksquare$ in the expected manner. We also show that the $\ignoBD$-counterparts of the rules and axioms of classical logic of ignorance in~\cite{GilbertKubyshkinaPetroloVenturi2022} are valid. Moreover, we prove that $\blacksquare$ can be used as a~knowledge modality since truthfulness, positive, and negative introspection are valid on $\mathbf{S5}$-frames (and since $\mathbf{S5}$-frames are definable using $\blacktriangle$~\cite[Theorem~5.4]{KozhemiachenkoVashentseva2023}). We provide a~sound and complete tableaux calculus for $\ignoBD$ and $\knowBD$ in Section~\ref{sec:tableaux}.

Section~\ref{sec:expressivity} explores the expressivity of $\blacksquare$, $\mathbf{I}$, and $\Box$. Namely, we show that all three modalities are not mutually interdefinable. Section~\ref{sec:definability} addresses the definability of several important classes of frames in the language with $\blacksquare$. In particular, we show that Euclidean, serial, as well as transitive Euclidean frames are definable and thus $\blacksquare$ can be used as a~doxastic modality as well.

Finally, in Section~\ref{sec:conclusion}, we summarise the results of the paper and provide a~roadmap for future research.
\section{Belief, knowledge, and ignorance in Belnap--Dunn logic\label{sec:preliminaries}}
In this section, we argue that the usual definition of $\Box$ in modal expansions of $\BD$ is not well-suited for the analysis of doxastic and epistemic contexts. To do this, let us first recall\footnote{In this paper, we use Odintsov's and Wansing's~\cite{OdintsovWansing2010,OdintsovWansing2017} presentation of semantics of non-classical modal logics which uses \emph{two} valuations on a~frame --- $v^+$ (support of truth) and $v^-$ (support of falsity).} the semantics of $\KBD$ (alias $\KFDE$) from~\cite[\S11a.4]{Priest2008FromIftoIs}.
\begin{convention}[Languages]\label{conv:languages}
We fix a~countable set $\Var=\{p,q,r,\ldots\}$ of propositional variables and define the language $\Lall$ using the following grammar in Backus--Naur form.
\begin{align*}
\Lall\ni\phi&\coloneqq p\in\Var\mid\neg\phi\mid(\phi\wedge\phi)\mid(\phi\vee\phi)\mid\Box\phi\mid\blacksquare\phi\mid\mathbf{I}\phi
\end{align*}
In this paper, we will be mostly concerned with three fragments of $\Lall$ --- $\LBox$, $\Lknow$, and $\Ligno$ that contain only one modality: $\Box$, $\blacksquare$, and $\mathbf{I}$, respectively.
\end{convention}
\begin{definition}[Semantics of $\KBD$]\label{def:KBD}
A \emph{frame} is a~tuple $\mathfrak{F}=\langle W,R\rangle$ with $W\neq\varnothing$, $R$ being a~binary accessibility relation on $W$. A~\emph{model} is a~tuple $\mathfrak{M}=\langle W,R,v^+,v^-\rangle$ with $\langle W,R\rangle$ being a~frame and $v^+$ and $v^-$~being maps from $\Var$ to $2^W$ interpreted as support of truth and support of falsity, respectively. If $w\in\mathfrak{M}$, a~tuple $\langle\mathfrak{M},w\rangle$ is called a~\emph{pointed model}. We also set $R(w)=\{w':wRw'\}$ and $R^!(w)=R(w)\setminus\{w\}$.

The semantics of $\LBox$ formulas is defined as follows.
\begin{align*}
\mathfrak{M},w\Vdash^+p&\text{ iff }w\in v^+(p)&\mathfrak{M},w\Vdash^-p&\text{ iff }w\in v^-(p)\\
\mathfrak{M},w\Vdash^+\neg\phi&\text{ iff }\mathfrak{M},w\Vdash^-\phi&\mathfrak{M},w\Vdash^-\neg\phi&\text{ iff }\mathfrak{M},w\Vdash^+\phi\\
\mathfrak{M},w\Vdash^+\phi_1\!\wedge\!\phi_2&\text{ iff }\mathfrak{M},w\Vdash^+\phi_1\text{ and }\mathfrak{M},w\Vdash^+\phi_2&\mathfrak{M},w\Vdash^-\phi_1\!\wedge\!\phi_2&\text{ iff }\mathfrak{M},w\Vdash^-\phi_1\text{ or }\mathfrak{M},w\Vdash^-\phi_2\\
\mathfrak{M},w\Vdash^+\phi_1\!\vee\!\phi_2&\text{ iff }\mathfrak{M},w\Vdash^+\phi_1\text{ or }\mathfrak{M},w\Vdash^+\phi_2&\mathfrak{M},w\Vdash^-\phi_1\!\vee\!\phi_2&\text{ iff }\mathfrak{M},w\Vdash^-\phi_1\text{ and }\mathfrak{M},w\Vdash^-\phi_2\\
\mathfrak{M},w\Vdash^+\Box\phi&\text{ iff }\forall w'\in R(w):\mathfrak{M},w'\Vdash^+\phi&\mathfrak{M},w\Vdash^-\Box\phi&\text{ iff }\exists w'\in R(w):\mathfrak{M},w'\Vdash^-\phi
\end{align*}
We also write $\lozenge\phi$ as a~shorthand for $\neg\Box\neg\phi$.

Let $\mathfrak{F}$ be a~frame. $\phi\vdash\chi$ is \emph{valid on $\mathfrak{F}$} (denoted $\mathfrak{F}\models[\phi\vdash\chi]$) iff for any model $\mathfrak{M}$ on $\mathfrak{F}$, and for any $w\in\mathfrak{M}$, if $\mathfrak{M},w\Vdash^+\phi$, then $\mathfrak{M},w\Vdash^+\chi$. $\phi\vdash\chi$ is \emph{(universally) valid} iff it is valid on every frame.
\end{definition}
\begin{convention}[Notation in the models]
Throughout the paper, we are going to give examples of models. We will use the shorthands shown in Table~\ref{table:modelnotation} to denote the values of variables in states.
\begin{table}
\centering
\begin{tabular}{c|c}
\textbf{notation}&\textbf{meaning}\\\hline
$w:p^+$&$p$ is true and non-false at $w$\\
$w:p^-$&$p$ is false and non-true at $w$\\
$w:p^\pm$&$p$ is both true and false at $w$\\
$w:\xcancel{p}$&$p$ is neither true nor false at $w$
\end{tabular}
\caption{Notation in the models.}
\label{table:modelnotation}
\end{table}
\end{convention}
\begin{remark}\label{rem:4valuedBD}
In~\cite{Belnap1977fourvalued,Belnap1977computer}, $\BD$ is formulated as a~four-valued logic with truth table semantics where each value from $\{\true,\false,\both,\neither\}$\footnote{We will call these values “Belnapian”.} represents what a~computer or a~database might be told regarding a~given statement.
\begin{itemize}
\item $\true$ stands for “just told True”.
\item $\false$ stands for “just told False”.
\item $\both$ (or \textbf{Both}) stands for “told both True and False”.
\item $\neither$ (or \textbf{None}) stands for “told neither True nor False”.
\end{itemize}
It is also possible to formulate $\KBD$ as a~\emph{four-valued} modal logic (cf., e.g.,~\cite{Priest2008,Priest2008FromIftoIs}).
\end{remark}

Now, why is $\Box$ not well-suited to formalise belief or knowledge? Recall from classical logic that $\triangle\phi$ which is read as “$\phi$ is non-contingent”, “the agent knows whether $\phi$ is the case” (in the epistemic setting), or “the agent is opinionated w.r.t.\ $\phi$” (in the doxastic setting) is true at a~given state $w$ when $\phi$ has \emph{the same value in all accessible states}. Thus, $\Box\phi\rightarrow\triangle\phi$\footnote{Read “if the agent knows that $\phi$ is true, they know whether $\phi$ is true” or “if the agent believes in $\phi$, they are opinionated w.r.t.\ $\phi$”.} is a~valid formula. Classically, this is evident since $\triangle\phi\coloneqq\Box\phi\vee\Box\neg\phi$. Moreover, if the underlying frame is reflexive, $\triangle$~can be used to define~$\Box$: $\Box\phi\coloneqq\phi\wedge\triangle\phi$. This, however, is not necessarily the case in~$\BD$.

Indeed, $\BD$ can be viewed as a~four-valued logic (Remark~\ref{rem:4valuedBD}). Just as in classical logic, it is reasonable to consider “$\phi$ is non-contingent” true only if $\phi$ has the same value in all accessible states. This time, however, $\phi$ can have not two but four values. Note, however, that it is possible for $\Box p$ to be true at a~given state even when $p$ has different values in different states accessible from~$w$ (cf.~Fig.~\ref{fig:trueandfalse}). Thus, $\Box\phi\vee\Box\neg\phi$ is not a~suitable formalisation of the knowing whether modality in~$\BD$.
\begin{figure}
\centering
\begin{tikzpicture}[modal,node distance=1cm,world/.append style={minimum
size=1cm}]
\node[world] (w0) [label=below:{$w_0$}] {$p^+$};
\node[world] (w1) [right=of w0] [label=below:{$w_1$}] {$p^\pm$};
\node[] [left=of w0] {$\mathfrak{M}$:};
\path[->] (w0) edge[reflexive] (w0);
\path[->] (w0) edge (w1);
\end{tikzpicture}
\caption{$\mathfrak{M},w_0\Vdash^+\Box p$.}
\label{fig:trueandfalse}
\end{figure}
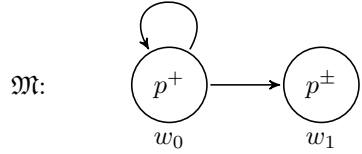

In~\cite{KozhemiachenkoVashentseva2023}, we were addressing this issue and introduced a~non-contingency modality $\blacktriangle$ that captures the intuition behind “knowing whether” in a~four-valued setting better than $\Box\phi\vee\Box\neg\phi$. Namely, for $\blacktriangle\phi$ to be true at $w$, $\phi$ should have the same Belnapian value in every accessible state (thus, $\mathfrak{M},w_0\nVdash^+\blacktriangle p$ in Fig.~\ref{fig:trueandfalse}). Now, we can introduce a~new doxastic or epistemic modality $\blacksquare$ that satisfies the above desideratum. Namely, in order for $\blacksquare\phi$ to be true at $w$, not only should $\phi$ be true at all accessible states but $\phi$ should have the same (Belnapian) value in all of them. In particular, it should be the case that $\mathfrak{M},w_0\nVdash^+\blacksquare p$.

Let us now present the intuitions behind the ignorance modality $\mathbf{I}$. First, we recall the \emph{classical} ignorance modality $\mathbb{I}$ as defined in~\cite{KubyshkinaPetrolo2021}. A \emph{classical} Kripke model\footnote{We refer our readers to~\cite{BlackburndeRijkeVenema2010} for the detailed presentation of the classical semantics of modal logic.} is a~tuple $\mathfrak{M}=\langle W,R,v\rangle$ with $W\neq\varnothing$, $R\subseteq W\times W$ and $v$ being a classical valuation. The semantics of $\mathbb{I}$ is then as follows:
\begin{align}
\mathfrak{M},w\Vdash\mathbb{I}\phi&\text{ iff }\mathfrak{M},w\Vdash\phi\text{ and }\forall w'\in R^!(w):\mathfrak{M},w'\nVdash\phi\label{equ:ignoclassical}
\end{align}
I.e., the agent is ignorant of $\phi$ when $\phi$ is true but the agent believes\footnote{This is very close to the “being wrong” modality $\mathbb{W}$ proposed in~\cite{Steinsvold2011} that is defined as follows: \[\mathfrak{M},w\Vdash\mathbb{W}\phi\text{ iff }\mathfrak{M},w\nVdash\phi\text{ and }\forall w'\in R(w):\mathfrak{M},w'\Vdash\phi\] I.e., $\phi$ is false but the agent believes that it is true. Note, however, that in contrast to $\mathbb{I}$, $\mathbb{W}$ uses the whole accessibility relation and \emph{does not exclude} $w$, i.e., $\mathbb{W}\phi\coloneqq\neg\phi\wedge\Box\phi$. Thus, $\mathbb{W}\phi$ is \emph{always false} on a~reflexive frame. We direct the reader to~\cite{GilbertKubyshkinaPetroloVenturi2022} for a~detailed comparison between $\mathbb{I}$ and $\mathbb{W}$.} that it is false (if they take into account accessible states that are different from $w$). Note that while $\mathbb{I}$ is not definable via $\Box$~\cite[\S3]{KubyshkinaPetrolo2021}, it is convenient to represent $\mathbb{I}\phi$ as $\phi\wedge\Box^!\neg\phi$ where $\Box^!$ is a~doxastic or epistemic modality w.r.t.~$R^!(w)$ (and not $R(w)$):
\begin{align}
\mathfrak{M},w\Vdash\Box^!\phi&\text{ iff }\forall w'\in R^!(w):\mathfrak{M},w'\Vdash\phi\label{equ:strictboxclassical}
\end{align}
\section{Belnap--Dunn logics of knowledge and ignorance\label{sec:logics}}
Let us now formalise the accounts of $\blacksquare$ and $\mathbf{I}$ given in the previous section. We begin with the presentation of $\knowBD$ and $\ignoBD$ and then discuss their semantical properties.
\subsection{Language and semantics\label{ssec:ignoknowlanguage}}
The next definition presents the Kripke semantics of $\blacksquare$ and $\mathbf{I}$.
\begin{definition}[Kripke semantics of $\knowBD$ and $\ignoBD$]\label{def:ignoknowsemantics}
Let $\mathfrak{M}=\langle W,R,v^+,v^-\rangle$ be a model as presented in Definition~\ref{def:KBD}. We define the semantics of $\Lknow$ and $\Ligno$ formulas as follows: the truth and falsity conditions of propositional formulas are as in Definition~\ref{def:KBD}; the semantics of $\blacksquare\phi$ and $\mathbf{I}\phi$ are given below.
\begin{align*}
\mathfrak{M},w\Vdash^+\blacksquare\phi\text{ iff }&\forall w'\in R(w):\mathfrak{M},w'\Vdash^+\phi\text{ and }\forall w_1,w_2\in R(w):\mathfrak{M},w_1\Vdash^-\phi\Rightarrow\mathfrak{M},w_2\Vdash^-\phi\\
\mathfrak{M},w\Vdash^-\blacksquare\phi\text{ iff }&\exists w'\in R(w):\mathfrak{M},w'\Vdash^-\phi\text{ or }\exists w_1,w_2\in R(w):\mathfrak{M},w_1\Vdash^+\phi\text{ and }\mathfrak{M},w_2\nVdash^+\phi\\
\mathfrak{M},w\Vdash^+\mathbf{I}\phi\text{ iff }&\mathfrak{M},w\Vdash^+\phi\text{ and }\forall w'\in R^!(w):\mathfrak{M},w'\Vdash^-\phi\\&\text{ and }\forall w_1,w_2\in R^!(w):\mathfrak{M},w_1\Vdash^+\phi\Rightarrow\mathfrak{M},w_2\Vdash^+\phi\\
\mathfrak{M},w\Vdash^-\mathbf{I}\phi\text{ iff }&\mathfrak{M},w\Vdash^-\phi\text{ or }\exists w'\in R^!(w):\mathfrak{M},w'\Vdash^+\phi\\&\text{ or }\exists w'_1,w'_2\in R^!(w):\mathfrak{M},w'_1\nVdash^-\phi\text{ and }\mathfrak{M},w'_2\Vdash^-\phi
\end{align*}
We also write $\blacklozenge\phi$ as a~shorthand for $\neg\blacksquare\neg\phi$ and $\bullet\phi$ as a~shorthand for $\phi\wedge\neg\blacksquare\phi$.

The validity is defined as expected. For $\phi,\chi\in\Lknow$ ($\phi,\chi\in\Ligno$, respectively), $\phi\vdash\chi$ is \emph{valid on a~frame $\mathfrak{F}$} (denoted $\mathfrak{F}\models[\phi\vdash\chi]$) iff for any model $\mathfrak{M}$ on $\mathfrak{F}$, and for any $w\in\mathfrak{M}$, if $\mathfrak{M},w\Vdash^+\phi$, then $\mathfrak{M},w\Vdash^+\chi$. $\phi\vdash\chi$ is \emph{$\knowBD$ valid} (\emph{$\ignoBD$ valid}, respectively) iff it is valid on every frame.
\end{definition}
\begin{remark}\label{rem:ignobox!}
One can notice that, indeed, given a~frame $\mathfrak{F}=\langle W,R\rangle$ and a~pointed model $\langle\mathfrak{M},w\rangle$ on it, we have
\begin{align}\label{equ:ignobox!}
\mathfrak{M},w\Vdash^+\mathbf{I}p&\text{ iff }\mathfrak{M},w\Vdash^+p\wedge\blacksquare^!\neg p&\mathfrak{M},w\Vdash^-\mathbf{I}p&\text{ iff }\mathfrak{M},w\Vdash^-p\wedge\blacksquare^!\neg p
\end{align}
where $\blacksquare^!$ is associated to $R^!$ (recall~\eqref{equ:ignoclassical} and~\eqref{equ:strictboxclassical} as well as Definition~\ref{def:KBD}).

Moreover, it is easy to see from Definitions~\ref{def:KBD} and~\ref{def:ignoknowsemantics} that as long as all formulas have classical values in all states\footnote{I.e., there is no formula $\phi$ and no state $w$ s.t.\ one of the following holds:\begin{itemize}\item$\mathfrak{M},w\Vdash^+\phi$ and $\mathfrak{M},w\Vdash^-\phi$, or\item$\mathfrak{M},w\nVdash^+\phi$ and $\mathfrak{M},w\nVdash^-\phi$.\end{itemize}} of a~given model, then $\mathbf{I}$, $\blacksquare$, and $\bullet$ \emph{behave classically}.
\end{remark}
It is also clear that there are no valid formulas in $\KBD$, $\knowBD$, and $\ignoBD$ as the following proposition states.
\begin{proposition}\label{prop:novalidities}
Let $\phi\!\in\!\Lall$. Then, there are pointed models $\langle\mathfrak{M},w\rangle$ and $\langle\mathfrak{M}',w'\rangle$ s.t.\ $\mathfrak{M},w\nVdash^+\phi$ and $\mathfrak{M}',w'\Vdash^-\phi$.
\end{proposition}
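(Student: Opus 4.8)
The plan is to prove this by a straightforward induction on the structure of $\phi\in\Lall$, constructing for each formula a model that makes it non-true at some point and (separately) a model that makes it non-false at some point. In fact, the cleanest route is to prove a slightly stronger statement that supports the induction smoothly: for every $\phi$ there is a pointed model where $\phi$ is neither true nor false (a ``gap''), and another where $\phi$ is both true and false (a ``glut''). The gap model immediately gives $\mathfrak{M},w\nVdash^+\phi$ and the glut model gives $\mathfrak{M}',w'\Vdash^-\phi$, so this stronger claim implies the proposition. The key idea, already flagged in the excerpt, is that gaps and gluts propagate: if every variable is assigned a gap (resp.\ glut) throughout a model, then every formula inherits that gap (resp.\ glut).

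First I would set up the two witnessing models. For the gap model, take any frame (a single reflexive point $w$ suffices) and set $v^+(p)=v^-(p)=\varnothing$ for all variables, so that $\mathfrak{M},w\nVdash^+p$ and $\mathfrak{M},w\nVdash^-p$ everywhere. For the glut model, take the analogous frame with $v^+(p)=v^-(p)=W$, so every variable is both true and false at every state. I would then run the induction over the clauses of Definitions~\ref{def:KBD} and~\ref{def:ignoknowsemantics}. The propositional cases ($\neg$, $\wedge$, $\vee$) are immediate from the truth/falsity conditions in Table~\ref{table:BDinformal}: negation swaps a gap to a gap and a glut to a glut, while conjunction and disjunction preserve each of these states when both immediate subformulas are in the same state.

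The modal cases are where I would spend the real attention, though they are still routine once the pattern is seen. For the gap model, consider $\blacksquare\phi$ at $w$. By the induction hypothesis $\phi$ is nowhere true and nowhere false in the model, so in particular $\mathfrak{M},w'\Vdash^+\phi$ fails for the accessible $w'$ (via reflexivity, $w\in R(w)$), hence $\mathfrak{M},w\nVdash^+\blacksquare\phi$; and since no accessible state satisfies $\phi^-$ and no two accessible states disagree on $\phi^+$ (both are uniformly false), the falsity clause for $\blacksquare\phi$ also fails, so $\blacksquare\phi$ is likewise a gap. The cases for $\mathbf{I}\phi$ and for $\Box\phi$ are handled identically by inspecting their respective clauses: uniform gaps on subformulas force both the positive and negative modal conditions to fail. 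Dually, for the glut model every subformula is both true and false at every state, which makes the existential positive and negative conditions for $\Box\phi$, $\blacksquare\phi$, and $\mathbf{I}\phi$ all succeed, so these formulas are gluts too. (One checks that reflexivity ensures $R(w)$ and, where relevant, $R^!(w)$ behave harmlessly; since the two states needed in the ``same value'' clauses can be taken equal, no genuine disagreement ever arises.)

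The main obstacle, such as it is, lies in the bookkeeping for $\mathbf{I}$ and $\blacksquare$, whose clauses mix a universal ``$\phi$ true everywhere'' (or ``$\phi$ false everywhere'') requirement with a ``same value'' uniformity requirement and, for $\mathbf{I}$, an extra conjunct about the point $w$ itself. I would treat these carefully to confirm that in the gap model the positive clause fails because its leading universal conjunct fails, while the negative clause fails because each of its existential disjuncts fails; and symmetrically in the glut model. Because the gap and glut are perfectly uniform across all states, the uniformity subclauses are always vacuously satisfied and never obstruct the argument, which is precisely why the stronger ``gap/glut'' formulation makes the induction go through cleanly.
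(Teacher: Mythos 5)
Your proposal is correct and is essentially the paper's own argument: the paper uses exactly the same two single-reflexive-point models (one with all variables glutted, one with all variables gapped) and checks by induction that every $\Lall$-formula is both true and false in the first and neither true nor false in the second, which is precisely your strengthened gap/glut claim. The only difference is that you spell out the modal cases that the paper leaves as ``easy to check,'' and you do so correctly (in particular, noting that $R^!(w)=\varnothing$ at the reflexive point, so the $\mathbf{I}$-clauses behave harmlessly).
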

\begin{proof}
We construct two pointed models: the one where \emph{every} $\Lall$-formula is \emph{false}, and the other where every $\Lall$-formula is \emph{non-true}. Consider Fig.~\ref{fig:BandNall}. It is easy to check by induction that for any $\phi\in\Lall$ (1) $\mathfrak{M},w_0\Vdash^+\phi$ and $\mathfrak{M},w_0\Vdash^-\phi$; (2) $\mathfrak{M},w'_0\nVdash^+\phi$ and $\mathfrak{M},w'_0\nVdash^-\phi$. The result follows.
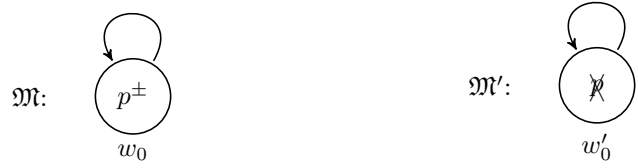
\begin{figure}
\centering
\begin{tikzpicture}[modal,node distance=0.5cm,world/.append style={minimum
size=1cm}]
\node[world] (w0) [label=below:{$w_0$}] {$p^\pm$};
\node[] [left=of w0] {$\mathfrak{M}$:};
\path[->] (w0) edge[reflexive] (w0);
\end{tikzpicture}
\hfil
\begin{tikzpicture}[modal,node distance=0.5cm,world/.append style={minimum
size=1cm}]
\node[world] (w0) [label=below:{$w'_0$}] {$\xcancel{p}$};
\node[] [left=of w0] {$\mathfrak{M}'$:};
\path[->] (w0) edge[reflexive] (w0);
\end{tikzpicture}
\caption{All variables have the same value in both models as exemplified by $p$.}
\label{fig:BandNall}
\end{figure}
\end{proof}

Note that all logics we are considering here --- $\KBD$, $\knowBD$, and $\ignoBD$ --- are conservative expansions of $\BD$. This, however, is not sufficient to obtain Proposition~\ref{prop:novalidities} since it is possible to define modalities in such a way that \emph{modal formulas} can be valid.

Let us now show a~technical result analogous to~\cite[Lemma~2.12]{KozhemiachenkoVashentseva2023} that will simplify some proofs in this section.
\begin{definition}[Dual models]\label{dualvaluationsdefinition}
For any model $\mathfrak{M}=\langle W,R,v^+,v^-\rangle$, we define its \emph{dual model} on the same frame $\mathfrak{M}_\partial=\langle W,R,v^+_\partial,v^-_\partial\rangle$ as follows.
\begin{align*}
\text{if }w\in v^+(p)\text{ and }w\notin v^-(p)&\text{ then }w\in v^+_\partial(p)\text{ and }w\notin v^-_\partial(p)\\
\text{if }w\in v^+(p)\text{ and }w\in v^-(p)&\text{ then }w\notin v^+_\partial(p)\text{ and }w\notin v^-_\partial(p)\\
\text{if }w\notin v^+(p)\text{ and }w\notin v^-(p)&\text{ then }w\in v^+_\partial(p)\text{ and }w\in v^-_\partial(p)\\
\text{if }w\notin v^+(p)\text{ and }w\in v^-(p)&\text{ then }w\notin v^+_\partial(p)\text{ and }w\in v^-_\partial(p)
\end{align*}
\end{definition}
In other words, if a~variable was either true and non-false or false and non-true in some state in a~model, then it remains such in the dual\footnote{Note that the dual model swaps $\both$ and $\neither$ which mimics the behaviour of \emph{conflation} presented in~\cite{Fitting1994}. We chose against ‘conflated model’ for two reasons: first, to preserve the terminology from~\cite{KozhemiachenkoVashentseva2023} where such models are also called ‘dual’. Second, ‘conflated model’ might sound confusing. Note, finally, that $(\mathfrak{M}^\partial)^\partial=\mathfrak{M}$.} model. But if it was both true and false, it becomes neither true nor false and vice versa.
\begin{proposition}\label{prop:dualvaluations}
Let $\mathfrak{M}=\langle W,R,v^+,v^-\rangle$ be a~model and $\mathfrak{M}_\partial=\langle W,R,v^+_\partial,v^-_\partial\rangle$ be its dual model. Then for any $\phi\in\Ligno\cup\Lknow$ and $w\in\mathfrak{M}$, it holds that
\begin{align*}
\text{if }\mathfrak{M},w\Vdash^+\phi\text{ and }\mathfrak{M},w\nVdash^-\phi&\text{ then }\mathfrak{M}_\partial,w\Vdash^+\phi\text{ and }\mathfrak{M}_\partial,w\nVdash^-\phi\\
\text{if }\mathfrak{M},w\Vdash^+\phi\text{ and }\mathfrak{M},w\Vdash^-\phi&\text{ then }\mathfrak{M}_\partial,w\nVdash^+\phi\text{ and }\mathfrak{M}_\partial,w\nVdash^-\phi\\
\text{if }\mathfrak{M},w\nVdash^+\phi\text{ and }\mathfrak{M},w\nVdash^-\phi&\text{ then }\mathfrak{M}_\partial,w\Vdash^+\phi\text{ and }\mathfrak{M}_\partial,w\Vdash^-\phi\\
\text{if }\mathfrak{M},w\nVdash^+\phi\text{ and }\mathfrak{M},w\Vdash^-\phi&\text{ then }\mathfrak{M}_\partial,w\nVdash^+\phi\text{ and }\mathfrak{M}_\partial,w\Vdash^-\phi
\end{align*}
\end{proposition}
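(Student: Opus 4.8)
The plan is to proceed by induction on the structure of $\phi\in\Ligno\cup\Lknow$. The statement is a four-way case split recording how the ``truth profile'' of $\phi$ at $w$ (which of the four Belnapian values it takes) transforms under passage to the dual model. It is cleaner to reformulate the claim as a single slogan preserved by the induction: passing from $\mathfrak{M}$ to $\mathfrak{M}_\partial$ leaves the ``exactly true'' and ``exactly false'' profiles fixed while swapping the ``both'' and ``neither'' profiles. Concretely, writing the four profiles as $\ltrue$ (true, non-false), $\lfalse$ (false, non-true), $\both$ (both), $\neither$ (neither), the operation on profiles induced by dualisation is the involution fixing $\ltrue,\lfalse$ and swapping $\both\leftrightarrow\neither$. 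I would phrase the induction hypothesis as: the profile of $\phi$ at $w$ in $\mathfrak{M}_\partial$ equals the image of its profile in $\mathfrak{M}$ under this involution.

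First I would handle the base case $\phi=p\in\Var$, which is exactly the content of Definition~\ref{dualvaluationsdefinition}: the four clauses defining $\mathfrak{M}_\partial$ are precisely the four lines of the proposition read off for variables. Next, the propositional connectives. For $\neg\phi$, note that negation swaps support of truth and support of falsity, which on profiles fixes $\both$ and $\neither$ and swaps $\ltrue\leftrightarrow\lfalse$; since our involution commutes with this swap (both act as the identity on $\both,\neither$ and the two swaps are on disjoint coordinates), the inductive step is immediate. For $\wedge$ and $\vee$, I would verify that the involution is a homomorphism for the four-valued truth tables of conjunction and disjunction --- equivalently, that the four-element algebra admits this map as an endomorphism. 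This is the standard ``conflation'' automorphism of the Belnap lattice (swapping $\both$ and $\neither$ while fixing $\ltrue,\lfalse$), so the connective cases reduce to a finite check on truth tables and follow mechanically from the support-of-truth/support-of-falsity clauses of Definition~\ref{def:KBD}.

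The genuinely new cases, and the main obstacle, are the modal operators $\blacksquare$ and $\mathbf{I}$. Here I cannot argue purely on the profile at a single world: the semantics of $\blacksquare\phi$ at $w$ quantifies over the profiles of $\phi$ at all worlds in $R(w)$, and similarly $\mathbf{I}\phi$ quantifies over $R^!(w)$. The key observation is that both clauses can be rewritten in a form that depends on the multiset of profiles occurring among the successors \emph{only through} which profiles appear, not through any asymmetry between $\both$ and $\neither$. For $\blacksquare\phi$, the truth condition says every successor supports truth \emph{and} the set of successors is homogeneous with respect to support of falsity; phrased on profiles, $\blacksquare\phi$ is exactly true/false/both/neither at $w$ according to whether the successors' profiles are uniformly some fixed value, fail uniformity, etc. Since the involution is a bijection on profiles that preserves the two predicates ``supports truth'' and ``supports falsity'' up to the $\both\leftrightarrow\neither$ swap --- and uniformity of a set under these predicates is invariant under applying the bijection worldwise --- the profile of $\blacksquare\phi$ transforms by the same involution. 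I would make this precise by unfolding the four truth/falsity conditions of $\blacksquare\phi$ (resp.\ $\mathbf{I}\phi$) and checking, using the induction hypothesis applied at each $w'\in R(w)$ (resp.\ $R^!(w)$), that each of the four profile-cases for $\blacksquare\phi$ at $w$ in $\mathfrak{M}$ maps to the asserted profile in $\mathfrak{M}_\partial$.

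The delicate point to get right in writing this up is the ``homogeneity'' conjuncts in the semantics of $\blacksquare$ and $\mathbf{I}$ (the $\forall w_1,w_2$ and $\exists w_1,w_2$ clauses): I must confirm that applying the profile-involution pointwise across the successor set preserves the statements ``all successors agree on support of falsity'' and ``two successors disagree on support of truth,'' since the involution, being a bijection on profiles that merely relabels $\both$ and $\neither$, preserves the partition of the successor set into profile-classes and hence preserves every such agreement/disagreement statement. Once that invariance is recorded, the four modal subcases fall out by the same bookkeeping as the propositional cases. For $\mathbf{I}$ I would additionally note that it also involves the profile of $\phi$ at $w$ itself (the conjunct $\mathfrak{M},w\Vdash^+\phi$, resp.\ the disjunct $\mathfrak{M},w\Vdash^-\phi$), so the inductive step there combines the single-world hypothesis at $w$ with the successor-set invariance over $R^!(w)$; alternatively, I could invoke Remark~\ref{rem:ignobox!} to reduce $\mathbf{I}$ to $\blacksquare^!$ and $\wedge$, collapsing this case into the already-treated ones.
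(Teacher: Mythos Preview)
Your proposal is correct and follows essentially the same route as the paper's proof: induction on $\phi$, with the base case read off from Definition~\ref{dualvaluationsdefinition}, the propositional connectives handled via the known fact that conflation is a $\BD$-automorphism (the paper phrases this as admissibility of contraposition, citing~\cite{Font1997,Dunn2000,ZaitsevShramko2004english}), the $\blacksquare$ case handled by an explicit four-way case analysis on the profile of $\blacksquare\phi$ at $w$, and the $\mathbf{I}$ case reduced to $\blacksquare^!$ and $\wedge$ via Remark~\ref{rem:ignobox!}.

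One small caution on your write-up: the high-level slogan that the involution ``preserves the predicates `supports truth' and `supports falsity' up to the $\both\leftrightarrow\neither$ swap'' is not literally true (conflation sends $\both$, which supports truth, to $\neither$, which does not), so the invariance you need is not of those predicates themselves but of the \emph{equivariance} of the map from successor-profile-multisets to the profile of $\blacksquare\phi$. You already acknowledge this by promising to unfold the four cases explicitly; just be sure the final text does not rest on the slogan alone. The paper's proof does exactly this unfolding for the $\ltrue$ and $\lfalse$ profiles of $\blacksquare\phi$ and declares the $\both$ and $\neither$ cases analogous.
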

\begin{proof}
We adapt the technique from~\cite{ZaitsevShramko2004english} and prove the statement by induction on $\phi$. The basis case of propositional variables holds by the construction of $v^+_\partial$ and $v^-_\partial$. The cases of propositional connectives hold by virtue of the admissibility of the contraposition in $\BD$~\cite{Font1997,Dunn2000,ZaitsevShramko2004english}. It remains to consider the cases of $\blacksquare$ and $\mathbf{I}$.

Let $\mathfrak{M},w\Vdash^+\blacksquare\phi$ and $\mathfrak{M},w\nVdash^-\blacksquare\phi$. Then, $\mathfrak{M},w'\Vdash^+\phi$ and $\mathfrak{M},w'\nVdash^-\phi$ in all $w'\in R(w)$. Applying the induction hypothesis, we have that $\mathfrak{M}_\partial,w'\Vdash^+\phi$ and $\mathfrak{M}_\partial,w'\nVdash^-\phi$ in all $w'\in R(w)$, whence $\mathfrak{M}_\partial,w\Vdash^+\blacksquare\phi$ and $\mathfrak{M}_\partial,w\nVdash^-\blacksquare\phi$.

Now assume that $\mathfrak{M},w\nVdash^+\blacksquare\phi$ and $\mathfrak{M},w\Vdash^-\blacksquare\phi$. Then, there are the following cases.
\begin{enumerate}
\item[$(a)$] There is $w'\in R(w)$ s.t.\ $\mathfrak{M},w'\nVdash^+\phi$ and $\mathfrak{M},w'\Vdash^-\phi$.
\item[$(b)$] There are $w_1,w_2\in R(w)$ s.t.\ one of the following holds:
\begin{enumerate}
\item[$(b.1)$] $\mathfrak{M},w_1\Vdash^+\phi$ and $\mathfrak{M},w_1\nVdash^-\phi$ but $\mathfrak{M},w_2\Vdash^+\phi$ and $\mathfrak{M},w_2\Vdash^-\phi$;
\item[$(b.2)$] $\mathfrak{M},w_1\Vdash^+\phi$ and $\mathfrak{M},w_1\nVdash^-\phi$ but $\mathfrak{M},w_2\nVdash^+\phi$ and $\mathfrak{M},w_2\nVdash^-\phi$;
\item[$(b.3)$] $\mathfrak{M},w_1\Vdash^+\phi$ and $\mathfrak{M},w_1\Vdash^-\phi$ but $\mathfrak{M},w_2\nVdash^+\phi$ and $\mathfrak{M},w_2\nVdash^-\phi$.
\end{enumerate}
\end{enumerate}
Applying the induction hypothesis, we obtain the following.
\begin{enumerate}
\item[$(a')$] There is $w'\in R(w)$ s.t.\ $\mathfrak{M},w'\nVdash^+\phi$ and $\mathfrak{M},w'\Vdash^-\phi$ (nothing changes from $(a)$).
\item[$(b')$] There are $w_1,w_2\in R(w)$ s.t.\ one of the following holds:
\begin{enumerate}
\item[$(b'.1)$] $\mathfrak{M},w_1\Vdash^+\phi$ and $\mathfrak{M},w_2\nVdash^-\phi$ but $\mathfrak{M},w_1\nVdash^+\phi$ and $\mathfrak{M},w_2\nVdash^-\phi$;
\item[$(b'.2)$] $\mathfrak{M},w_1\Vdash^+\phi$ and $\mathfrak{M},w_2\nVdash^-\phi$ but $\mathfrak{M},w_1\Vdash^+\phi$ and $\mathfrak{M},w_2\Vdash^-\phi$;
\item[$(b'.3)$] $\mathfrak{M},w_1\nVdash^+\phi$ and $\mathfrak{M},w_1\nVdash^-\phi$ but $\mathfrak{M},w_2\nVdash^+\phi$ and $\mathfrak{M},w_2\nVdash^-\phi$.
\end{enumerate}
\end{enumerate}
It is clear that in all cases: $(a')$ and $(b'.1)$--$(b'.3)$, it holds that $\mathfrak{M}_\partial,w\nVdash^+\blacksquare\phi$ and $\mathfrak{M}_\partial,w\Vdash^-\blacksquare\phi$.

The cases where $\mathfrak{M},w\Vdash^+\blacksquare\phi$ and $\mathfrak{M},w\Vdash^-\blacksquare\phi$ or $\mathfrak{M},w\nVdash^+\blacksquare\phi$ and $\mathfrak{M},w\nVdash^-\blacksquare\phi$ can be tackled similarly.

Let us proceed to $\mathbf{I}\phi$. Observe from Remark~\ref{rem:ignobox!} and~\eqref{equ:ignobox!} that $\mathbf{I}\phi$ can be defined as $\phi\wedge\blacksquare^!\phi$. Since the statement holds for $\blacksquare$ and propositional connectives on every frame $\mathfrak{F}=\langle W,R\rangle$ and since $\blacksquare^!$ is just $\blacksquare$ defined with $R^!(w)$ instead of $R(w)$, we obtain the result for $\mathbf{I}\phi$ as well.
\end{proof}
\begin{remark}\label{rem:equivalence}
Proposition~\ref{prop:dualvaluations} has an important immediate consequence: if both $\phi\vdash\chi$ and $\chi\vdash\phi$ are valid (on a~given frame), then
\begin{align*}
\mathfrak{M},w\Vdash^+\phi&\text{ iff }\mathfrak{M},w\Vdash^+\chi&\mathfrak{M},w\Vdash^-\phi&\text{ iff }\mathfrak{M},w\Vdash^-\chi
\end{align*}
for every pointed model $\langle\mathfrak{M},w\rangle$ (on that frame). Moreover, it follows that if $\phi\vdash\chi$ is valid (on a~given frame), then $\neg\chi\vdash\neg\phi$ is also valid (on that frame). I.e., the contraposition is sound, as expected.
\end{remark}
\subsection{Semantical properties of $\blacksquare$\label{ssec:knowsemantics}}
Let us now show that $\blacksquare$ conforms to the intuitions outlined in Section~\ref{sec:preliminaries}. We begin with recalling the semantics of $\blacktriangle$ from~\cite{KozhemiachenkoVashentseva2023}.
\begin{definition}[Semantics of $\blacktriangle$]\label{def:contingencysemantics}
Let $\mathfrak{M}=\langle W,R,v^+,v^-\rangle$ be a model as presented in Definition~\ref{def:KBD}. To make the presentation of the semantics for $\blacktriangle$ more concise we introduce the following conditions.
\begin{align*}
\forall w_1,w_2\in R(w_0):(\mathfrak{M},w_1\!\Vdash^+\!\phi\!\Rightarrow\!\mathfrak{M},w_2\!\Vdash^+\!\phi)~\&~(\mathfrak{M},w_1\!\Vdash^-\!\phi\!\Rightarrow\!\mathfrak{M},w_2\Vdash^-\phi)\tag{$t_1\blacktriangle$}\label{t1conditionI}\\
\forall w_1\in R(w_0):\mathfrak{M},w_1\Vdash^+\phi\text{ or }\mathfrak{M},w_1\Vdash^-\phi\tag{$t_2\blacktriangle$}\label{t2conditionI}\\
\exists w_1,w_2\in R(w_0):\mathfrak{M},w_1\Vdash^+\phi~\&~\mathfrak{M},w_2\nVdash^+\phi\tag{$f_1\blacktriangle$}\label{f1conditionI}\\
\exists w_1,w_2\in R(w_0):\mathfrak{M},w_1\Vdash^-\phi~\&~\mathfrak{M},w_2\nVdash^-\phi\tag{$f_2\blacktriangle$}\label{f2conditionI}\\
\exists w_1,w_2\in R(w_0):\mathfrak{M},w_1\Vdash^+\phi~\&~\mathfrak{M},w_2\Vdash^-\phi\tag{$f_3\blacktriangle$}\label{fconditionS}
\end{align*}

Using these conditions, support of truth and support of falsity of $\blacktriangle$ is defined as follows.
\begin{align*}
\mathfrak{M},w_0\Vdash^+\blacktriangle\phi&\text{ iff }\eqref{t1conditionI}\text{ and }\eqref{t2conditionI}&\mathfrak{M},w_0\Vdash^-\blacktriangle\phi&\text{ iff }\eqref{f1conditionI}\text{ or }\eqref{f2conditionI}\text{ or }\eqref{fconditionS}
\end{align*}
We also write $\blacktriangledown\phi$ as a~shorthand for $\neg\blacktriangle\phi$.
\end{definition}

The next statement shows that $\blacksquare$ behaves in the desired way. Namely, it has the expected connection with the “knowledge whether” modality and, in addition, truthfulness, positive introspection, and negative introspection are valid on $\mathbf{S5}$ frames (i.e., frames $\langle W,R\rangle$ where $R$ is an equivalence relation).
\begin{theorem}\label{theorem:knowisgood}~
\begin{enumerate}
\item $\blacktriangle p\dashv\vdash\blacksquare p\vee\blacksquare\neg p$ is valid on every frame.
\item $\blacksquare p\dashv\vdash p\wedge\blacktriangle p$ is valid on every \emph{reflexive} frame.
\item Let $\mathfrak{F}=\langle W,R\rangle$ be an \emph{$\mathbf{S5}$ frame}. Then $\blacksquare p\vdash p$, $\blacksquare p\vdash\blacksquare\blacksquare p$, and $\blacklozenge p\vdash\blacksquare\blacklozenge p$ are valid on $\mathfrak{F}$.
\end{enumerate}
\end{theorem}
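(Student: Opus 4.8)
The plan is to unfold the semantic clauses for $\blacksquare$ and $\blacktriangle$ and reason directly, never leaving the model level. Since each claim is a sequent $\phi\vdash\chi$ or a double sequent $\phi\dashv\vdash\chi$, and $\phi\vdash\chi$ is valid exactly when support of truth is preserved at every point of every model, it suffices throughout to compare the support-of-truth conditions of the two sides; the $\dashv\vdash$ claims then reduce to showing these conditions coincide (Remark~\ref{rem:equivalence} guarantees that the falsity conditions fall into line automatically). To keep the bookkeeping light I would fix a pointed model $\langle\mathfrak{M},w\rangle$ and, for the variable $p$, record the sets $T=\{w'\in R(w):\mathfrak{M},w'\Vdash^+p\}$ and $F=\{w'\in R(w):\mathfrak{M},w'\Vdash^-p\}$. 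In this notation $\mathfrak{M},w\Vdash^+\blacksquare p$ reads $T=R(w)$ and $F\in\{\varnothing,R(w)\}$, while $\mathfrak{M},w\Vdash^+\blacktriangle p$ reads $T,F\in\{\varnothing,R(w)\}$ together with $T\cup F=R(w)$.

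For item~1 I would first read off from the clauses for $\neg$ that $\mathfrak{M},w\Vdash^+\blacksquare\neg p$ says $F=R(w)$ and $T\in\{\varnothing,R(w)\}$. A short case analysis then shows that $\mathfrak{M},w\Vdash^+\blacksquare p$ or $\mathfrak{M},w\Vdash^+\blacksquare\neg p$ holds iff the pair $(T,F)$ is one of $(R(w),\varnothing)$, $(R(w),R(w))$, or $(\varnothing,R(w))$, i.e.\ $p$ is uniformly exactly-true, uniformly both, or uniformly exactly-false across $R(w)$. This is precisely the condition for $\mathfrak{M},w\Vdash^+\blacktriangle p$ (the degenerate case $R(w)=\varnothing$ gives $T=F=\varnothing$ and satisfies both sides vacuously), so support of truth agrees and $\dashv\vdash$ follows.

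For item~2 reflexivity supplies $w\in R(w)$. If $\mathfrak{M},w\Vdash^+\blacksquare p$ then $T=R(w)$ forces $\mathfrak{M},w\Vdash^+p$, and $T=R(w)$ with $F\in\{\varnothing,R(w)\}$ gives $\mathfrak{M},w\Vdash^+\blacktriangle p$. Conversely, if $\mathfrak{M},w\Vdash^+p\wedge\blacktriangle p$, then $w\in T$ (using $w\in R(w)$), so the admissible value $T\in\{\varnothing,R(w)\}$ is forced to be $T=R(w)$; combined with $F\in\{\varnothing,R(w)\}$, which is exactly the uniform-falsity conjunct, this yields $\mathfrak{M},w\Vdash^+\blacksquare p$. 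Item~3 is where I would isolate the one structural fact carrying the argument: on an $\mathbf{S5}$ frame $R$ is an equivalence relation, so $w'\in R(w)$ implies $R(w')=R(w)$. Hence the support-of-truth and support-of-falsity conditions of any formula $\blacksquare\psi$ — and therefore of $\blacklozenge\psi=\neg\blacksquare\neg\psi$ — depend only on $R(w)$ and are constant across each equivalence class. Truthfulness $\blacksquare p\vdash p$ follows from reflexivity as in item~2. For $\blacksquare p\vdash\blacksquare\blacksquare p$ and $\blacklozenge p\vdash\blacksquare\blacklozenge p$ this constancy makes both checks immediate: if $\mathfrak{M},w\Vdash^+\blacksquare p$ (resp.\ $\Vdash^+\blacklozenge p$), every $w'\in R(w)$ satisfies the same, which supplies the first conjunct of $\Vdash^+\blacksquare(\cdot)$, while constancy of the falsity-support of $\blacksquare p$ (resp.\ $\blacklozenge p$) across $R(w)$ supplies the uniformity conjunct.

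The main obstacle, such as it is, lies in item~1: one must verify that the two extra ``uniformity'' conjuncts built into the $\blacksquare$-clauses genuinely match the $\blacktriangle$-conditions in every configuration, and in particular that the glut case $(T,F)=(R(w),R(w))$ is captured by \emph{both} disjuncts of $\blacksquare p\vee\blacksquare\neg p$. Once the equivalence-class observation is recorded, items~2 and~3 are routine.
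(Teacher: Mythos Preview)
Your argument is correct. For items~1 and~2 you do exactly what the paper does---unfold Definitions~\ref{def:ignoknowsemantics} and~\ref{def:contingencysemantics}---only you spell out the bookkeeping with the sets $T,F$ where the paper simply declares the result ``immediate''. Your treatment of the degenerate case $R(w)=\varnothing$ and the glut case $(T,F)=(R(w),R(w))$ is accurate.

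For item~3 you take a genuinely different and cleaner route. The paper argues each sequent separately by case analysis: two cases for $\blacksquare p\vdash\blacksquare\blacksquare p$ (splitting on whether $\blacksquare p$ is also false at $w$) and four cases for $\blacklozenge p\vdash\blacksquare\blacklozenge p$ (splitting on the reason $\blacklozenge p$ is true), invoking reflexivity, transitivity, and Euclideanness piecemeal. You instead isolate the single structural fact that on an $\mathbf{S5}$ frame $w'\in R(w)$ implies $R(w')=R(w)$, observe that both $\Vdash^+$ and $\Vdash^-$ for $\blacksquare\psi$ (hence for $\blacklozenge\psi$) depend only on the equivalence class $R(w)$, and conclude that these formulas have constant Belnapian value across each class. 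Both introspection sequents then drop out in one line: truth everywhere in $R(w)$ gives the first conjunct of $\Vdash^+\blacksquare(\cdot)$, and constancy of the falsity support gives the uniformity conjunct. This buys you a uniform argument that covers both introspection axioms at once and avoids the enumeration of subcases; the paper's approach, by contrast, makes visible exactly which relational property is used at each step (e.g.\ reflexivity is explicitly needed to rule out $R(w')=\varnothing$ in the glut subcase), which is informative but noisier.
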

\begin{proof}
The proofs of 1.\ and 2.\ are immediate from Definitions~\ref{def:ignoknowsemantics} and~\ref{def:contingencysemantics}. Let us prove 3. Let $\mathfrak{F}=\langle W,R\rangle$ be an $\mathbf{S5}$ frame and $w\in W$. Now let $\mathfrak{M}$ be a~model on $\mathfrak{F}$ s.t.\ $\mathfrak{M},w\Vdash^+\blacksquare p$. Thus, $\mathfrak{M},w'\Vdash^+p$ in every $w'\in R(w)$. But since $R$ is reflexive, we have that $\mathfrak{M},w\Vdash^+p$ and thus, $\blacksquare p\vdash p$ is valid.

To prove the validity of $\blacksquare p\vdash\blacksquare\blacksquare p$, let again $\mathfrak{M},w\Vdash^+\blacksquare p$ for some model $\mathfrak{M}$ on $\mathfrak{F}$. We consider two cases. First, if $\mathfrak{M},w\nVdash^-\blacksquare p$, then $\mathfrak{M},w'\Vdash^+p$ and $\mathfrak{M},w'\nVdash^-p$ in every $w'\in R(w)$. Thus, since $R$ is transitive, we have that $\mathfrak{M},w'\Vdash^+\blacksquare p$ and $\mathfrak{M},w'\nVdash^-\blacksquare p$ in every $w'\in R(w)$. Second, let $\mathfrak{M},w\nVdash^+\blacksquare p$. Then, $\mathfrak{M},w'\Vdash^+p$ and $\mathfrak{M},w'\Vdash^-p$ in every $w'\in R(w)$. Thus, since $R$ is reflexive\footnote{Reflexivity is crucial here since $R(u)\neq\varnothing$ for every $u\in W$ which guarantees that $\blacksquare p$ is \emph{both true and false} in every state.} and transitive, we have that $\mathfrak{M},w'\Vdash^+\blacksquare p$ and $\mathfrak{M},w'\Vdash^-\blacksquare p$ in every $w'\in R(w)$. In the first case, we obtain that $\mathfrak{M},w\Vdash^+\blacksquare\blacksquare p$ and $\mathfrak{M},w\nVdash^-\blacksquare\blacksquare p$. In the second case, we have that $\mathfrak{M},w\Vdash^+\blacksquare\blacksquare p$ and $\mathfrak{M},w\Vdash^-\blacksquare\blacksquare p$. We can now conclude that $\blacksquare p\vdash\blacksquare\blacksquare p$ is valid.

Finally, let $\mathfrak{M},w\Vdash^+\blacklozenge p$. We have four cases:
\begin{enumerate}
\item there is $w'\in R(w)$ s.t.\ $\mathfrak{M},w'\Vdash^+p$ and $\mathfrak{M},w'\nVdash^-p$;
\item there are $w_1,w_2\in R(w)$ s.t.\ $\mathfrak{M},w_1\Vdash^-p$ and $\mathfrak{M},w_2\nVdash^-p$;
\item there are $w_1,w_2\in R(w)$ s.t.\ $\mathfrak{M},w_1\Vdash^+p$ and $\mathfrak{M},w_2\nVdash^+p$;
\item $\mathfrak{M},w'\Vdash^+p$ and $\mathfrak{M},w'\Vdash^-p$ in every $w'\in R(w)$.
\end{enumerate}
One can see that $\mathfrak{M},w\nVdash^-\blacklozenge p$ in cases 1--3 and $\mathfrak{M},w\Vdash^-\blacklozenge p$ in case 4. Now, since $R$ is Euclidean, it is clear that $uRu'$ for every $u,u'\in R(w)$. Thus, in cases 1--3, we have that $\mathfrak{M},t\Vdash^+\blacklozenge p$ and $\mathfrak{M},t\nVdash^-\blacklozenge p$ for every $t\in R(w)$, and in case 4., $\mathfrak{M},t\Vdash^+\blacklozenge p$ and $\mathfrak{M},t\Vdash^-\blacklozenge p$ for every $t\in R(w)$. Hence, $\mathfrak{M},w\Vdash^+\blacksquare\blacklozenge p$, as required and thus, $\blacklozenge p\vdash\blacksquare\blacklozenge p$ is valid on $\mathfrak{F}$.
\end{proof}
\begin{remark}\label{rem:knowS5isadequate}
The above statement shows that $\blacksquare$ fulfils the desiderata w.r.t.\ a~knowledge modality. First, it has the expected connections with the “knowledge whether” ($\blacktriangle$). Second, truthfulness, positive introspection, and negative introspection are valid on $\mathbf{S5}$ frames. In addition, $\mathbf{S5}$ frames are definable using $\blacktriangle$~\cite[Theorem~5.4]{KozhemiachenkoVashentseva2023} (and thus, using $\blacksquare$ as well). We will later see (cf.~Section~\ref{sec:definability}) that $\blacksquare$ can define several important doxastic classes of frames in a~natural way and thus can act as a~belief modality as well.
\end{remark}

\begin{remark}\label{rem:knownonstandard}
Note, however, that $\blacksquare$ is not a~standard modality in contrast to $\Box$. Indeed, while $\Box(p\wedge q)\vdash\Box p\wedge\Box q$ is valid on every frame, one can check that $\blacksquare(p\wedge q)\vdash\blacksquare p\wedge\blacksquare q$ is valid only on partial-functional frames (i.e., frames where $|R(w)|\leq1$ for every $w$). This is an expected consequence of its semantics since we demand that the Belnapian value of $\phi$ be the same in all accessible states for $\blacksquare\phi$ to be true or non-false.

Still, it is easy to see that
\begin{align}\label{equ:exactlytrueknow}
\mathfrak{M},w\Vdash^+\blacksquare(p\wedge q)\text{ and }\mathfrak{M},w\nVdash^-\blacksquare(p\wedge q)&\text{ iff }\mathfrak{M},w\Vdash^+\blacksquare p\wedge\blacksquare q\text{ and }\mathfrak{M},w\nVdash^-\blacksquare p\wedge\blacksquare q
\end{align}
I.e., to refute $\blacksquare(p\wedge q)\vdash\blacksquare p\wedge\blacksquare q$, one needs to assume that $\blacksquare(p\wedge q)$ is \emph{both true and false at $w$}. This is reasonable since the truth condition on $\blacksquare$ is stronger than that on $\Box$ (cf.~Definitions~\ref{def:KBD} and~\ref{def:ignoknowsemantics}). Furthermore, observe that if $\blacksquare\phi$ is \emph{both true and false at $w$}, it means that the agent believes (or knows) that $\phi$ is contradictory or paradoxical (since it has to be both true and false in all accessible states). But if $\phi=p\wedge q$, it makes sense to argue that the agent might not even have an opinion whether it is $p$, $q$, or both of them that have paradoxical value. In fact, one can maintain that in the presence of paradoxical truth-values, the belief is not compositional even w.r.t.\ conjunction~\cite{Dubois2008}.
\end{remark}
\subsection{Semantical properties of $\mathbf{I}$\label{ssec:ignosemantics}}
Let us now proceed to the Belnap--Dunn ignorance modality. To further motivate our semantics of~$\mathbf{I}$, we show that the counterparts of the modal axioms and rules presented in~\cite[Definition~1.12]{GilbertKubyshkinaPetroloVenturi2022} are valid. These are as follows:
\begin{align}
(\mathbb{I}1)~\mathbb{I}p\rightarrow p&&(\mathbb{I}2)~(\mathbb{I}p\wedge\mathbb{I}q)\rightarrow\mathbb{I}(p\vee q)&&(\mathbb{I}R)~\dfrac{\phi\rightarrow\chi}{\phi\rightarrow(\mathbb{I}\chi\rightarrow\mathbb{I}\phi)}
\label{equ:classicalignoaxioms}
\end{align}
To produce their $\ignoBD$-counterparts, we use the fact that $\phi\rightarrow(\mathbb{I}\chi\rightarrow\mathbb{I}\phi)$ is equivalent to $(\phi\wedge\mathbb{I}\chi)\rightarrow\mathbb{I}\phi$ in classical logic and then replace $\mathbb{I}$ with $\mathbf{I}$ and $\rightarrow$ with $\vdash$. This gives us the following:
\begin{align}
(\mathbf{I}1)~\mathbf{I}p\vdash p&&(\mathbf{I}2)~\mathbf{I}p\wedge\mathbf{I}q\vdash\mathbf{I}(p\vee q)&&(\mathbf{I}R)~\dfrac{\phi\vdash\chi}{\phi\wedge\mathbf{I}\chi\vdash\mathbf{I}\phi}
\label{equ:ignoaxioms}
\end{align}
\begin{theorem}
\label{theorem:ignoisgood}
All sequents ($\mathbf{I}1$ and $\mathbf{I}2$) and the rule ($\mathbf{I}R$) in~\eqref{equ:ignoaxioms} are valid on every frame.
\end{theorem}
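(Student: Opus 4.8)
The plan is to read each of the three items straight off the support-of-truth clause for $\mathbf{I}$ in Definition~\ref{def:ignoknowsemantics}, which I will treat as three separate requirements: (i)~$\mathfrak{M},w\Vdash^+\phi$; (ii)~$\mathfrak{M},w'\Vdash^-\phi$ for every $w'\in R^!(w)$; and (iii)~the support of truth of $\phi$ is constant across $R^!(w)$. Equivalently, by Remark~\ref{rem:ignobox!}, I may treat $\mathbf{I}\phi$ as $\phi\wedge\blacksquare^!\neg\phi$ and unfold the $\blacksquare^!$-clause into (ii) and (iii). Item $(\mathbf{I}1)$ is then immediate: if $\mathfrak{M},w\Vdash^+\mathbf{I}p$, clause (i) already yields $\mathfrak{M},w\Vdash^+p$, so $\mathbf{I}p\vdash p$ holds on every frame.

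For $(\mathbf{I}2)$ I would assume $\mathfrak{M},w\Vdash^+\mathbf{I}p$ and $\mathfrak{M},w\Vdash^+\mathbf{I}q$ and verify the three clauses for $\mathbf{I}(p\vee q)$. Clause (i) follows from $\mathfrak{M},w\Vdash^+p$, which forces $\mathfrak{M},w\Vdash^+p\vee q$. Clause (ii) uses that each $w'\in R^!(w)$ supports the falsity of both $p$ and $q$, which is exactly the falsity condition for a disjunction, so $\mathfrak{M},w'\Vdash^-p\vee q$. Clause (iii) needs a short case split: if $\mathfrak{M},w_1\Vdash^+p\vee q$ then $w_1$ supports the truth of $p$ or of $q$, and the constancy clause of $\mathbf{I}p$ (respectively $\mathbf{I}q$) carries that value to any other $w_2\in R^!(w)$, whence $\mathfrak{M},w_2\Vdash^+p\vee q$. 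This is routine.

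The rule $(\mathbf{I}R)$ is where the genuine work lies. From the validity of $\phi\vdash\chi$ I would first extract, via the contraposition noted in Remark~\ref{rem:equivalence}, that $\mathfrak{M},w'\Vdash^-\chi$ implies $\mathfrak{M},w'\Vdash^-\phi$ at every world. Assuming $\mathfrak{M},w\Vdash^+\phi\wedge\mathbf{I}\chi$, clause (i) for $\mathbf{I}\phi$ is simply the left conjunct $\mathfrak{M},w\Vdash^+\phi$, and clause (ii) for $\mathbf{I}\phi$ follows from clause (ii) for $\mathbf{I}\chi$ together with this contraposed implication.

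I expect clause (iii) to be the main obstacle. The difficulty is that $\phi\vdash\chi$ transfers support of truth only in the direction from $\phi$ to $\chi$, so the constancy of the truth-support of $\chi$ over $R^!(w)$ does not obviously descend to $\phi$: knowing $\mathfrak{M},w'\Vdash^+\chi$ tells us nothing about whether $\mathfrak{M},w'\Vdash^+\phi$. This is precisely the point at which the four-valued reading departs from the classical derivation, where clause (iii) is absent altogether. I would therefore concentrate the argument here, analysing which Belnapian values $\phi$ may take on $R^!(w)$ under clause (ii) and the constancy of $\chi$, and checking carefully whether a \emph{glut} value of $\chi$ can be realised across $R^!(w)$ by truth-support coming from different subformulas at different worlds, thereby leaving the truth-support of $\phi$ genuinely non-constant. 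It is this case analysis — rather than the falsity clauses, which go through mechanically by contraposition — that the validity of the rule ultimately turns on.
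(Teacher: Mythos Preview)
Your arguments for $(\mathbf{I}1)$ and $(\mathbf{I}2)$ are correct and coincide with the paper's. For $(\mathbf{I}R)$ the paper takes a different route: it argues by contraposition, starting from a pointed model with $\mathfrak{M},w\Vdash^+\phi\wedge\mathbf{I}\chi$ but $\mathfrak{M},w\nVdash^+\mathbf{I}\phi$, splitting on whether $\mathfrak{M},w\Vdash^-\mathbf{I}\phi$, and then passing to the dual model $\mathfrak{M}_\partial$ of Proposition~\ref{prop:dualvaluations} to produce a world at which $\phi$ is true while $\chi$ is not, thereby refuting $\phi\vdash\chi$.

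Your hesitation at clause~(iii), however, is exactly on target, and the obstruction you sketch is not merely a missing case analysis but a genuine counterexample. Take $\phi=p$ and $\chi=p\vee q$, so that $\phi\vdash\chi$ is universally valid. On the frame $\{w,w_1,w_2\}$ with $wRw_1$, $wRw_2$ and valuations $w\!:\!p^+,\xcancel{q}$; $w_1\!:\!p^\pm,q^-$; $w_2\!:\!p^-,q^\pm$, the formula $p\vee q$ has value $\both$ at both $w_1$ and $w_2$, so all three clauses for $\mathbf{I}(p\vee q)$ hold at $w$ and hence $\mathfrak{M},w\Vdash^+p\wedge\mathbf{I}(p\vee q)$. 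Yet $p$ is true at $w_1$ and non-true at $w_2$, so your clause~(iii) for $\mathbf{I}p$ fails and $\mathfrak{M},w\nVdash^+\mathbf{I}p$. This is precisely the ``glut of $\chi$ realised by different subformulas at different worlds'' scenario you anticipated. It shows that $(\mathbf{I}R)$ as stated is not valid; your direct verification therefore cannot be completed, and the paper's contrapositive argument breaks at the same point --- its case~$(2.c)$ asserts $\mathfrak{M}_\partial,w''\Vdash^+\phi$ for a world where $\phi$ has value~$\false$, but $\false$ is fixed under dualisation.
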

\begin{proof}
The validity of $\mathbf{I}1$ is evident from Definition~\ref{def:ignoknowsemantics}. Let us consider $\mathbf{I}2$. Assume that $\mathfrak{F}$ is a~frame and $\langle\mathfrak{M},w\rangle$ is a~pointed model on $\mathfrak{F}$ s.t.\ $\mathfrak{M},w\Vdash^+\mathbf{I}p\wedge\mathbf{I}q$. Then, we have that $\mathfrak{M},w\Vdash^+p$ and $\mathfrak{M},w\Vdash^+q$, that $\mathfrak{M},w'\Vdash^-p$ and $\mathfrak{M},w'\Vdash^-q$ in every $w'\in R^!(w)$, and, furthermore, that if $\mathfrak{M},u\Vdash^+p$ in some $u\in R^!(w)$, then $\mathfrak{M},u'\Vdash^+p$ in \emph{every} $u'\in R^!(w)$ (and likewise, if $\mathfrak{M},t\Vdash^+q$ in some $t\in R^!(w)$, then $\mathfrak{M},t'\Vdash^+q$ in \emph{every} $t'\in R^!(w)$). Thus, we have that $\mathfrak{M},w\Vdash^+p\vee q$, $\mathfrak{M},w'\Vdash^-p\vee q$ in every $w'\in R^!(w)$, and, in addition, if $\mathfrak{M},t\Vdash^-p\vee q$ in some $t\in R^!(w)$, then $\mathfrak{M},t'\Vdash^-p\vee q$ in \emph{every} $t'\in R^!(w)$. Hence, $\mathfrak{M},w\Vdash^+\mathbf{I}(p\vee q)$, as required.

To tackle $\mathbf{I}R$, we proceed by contraposition and assume that $\phi\wedge\mathbf{I}\chi\vdash\mathbf{I}\phi$ is not valid. Namely, that there is a~pointed model $\langle\mathfrak{M},w\rangle$ s.t.\ $\mathfrak{M},w\Vdash^+\phi\wedge\mathbf{I}\chi$ but $\mathfrak{M},w\nVdash^+\mathbf{I}\phi$. We consider two cases: (1) $\mathfrak{M},w\nVdash^-\mathbf{I}\phi$, and (2) $\mathfrak{M},w\Vdash^-\mathbf{I}\phi$.

If (1), we have that $\mathfrak{M},w'\nVdash^+\phi$ and $\mathfrak{M},w'\nVdash^-\phi$ in all $w'\in R^!(w)$ (since $\mathfrak{M}_\partial,w\Vdash^+\phi$). On the other hand, $\mathfrak{M},w'\Vdash^-\chi$ in all $w'\in R^!(w)$. Applying Proposition~\ref{prop:dualvaluations}, we obtain that $\mathfrak{M}_\partial,w'\Vdash^+\phi$ but $\mathfrak{M},w'\nVdash^-\chi$ in all $w'\in R^!(w)$, i.e., $\phi\vdash\chi$ is not valid.

If (2), one of the following holds:
\begin{enumerate}
\item[$(2.a)$] there is $w'\in R^!(w)$ s.t.\ $\mathfrak{M},w'\Vdash^+\phi$ and $\mathfrak{M},w'\nVdash^-\phi$;
\item[$(2.b)$] there are $w',w''\in R^!(w)$ s.t.\ $\mathfrak{M},w'\Vdash^+\phi$ and $\mathfrak{M},w'\Vdash^-\phi$ but $\mathfrak{M},w''\nVdash^+\phi$ and $\mathfrak{M},w''\nVdash^-\phi$;
\item[$(2.c)$] there are $w',w''\in R^!(w)$ s.t.\ $\mathfrak{M},w'\Vdash^+\phi$ and $\mathfrak{M},w'\Vdash^-\phi$ but $\mathfrak{M},w''\nVdash^+\phi$ and $\mathfrak{M},w''\Vdash^-\phi$;
\item[$(2.d)$] there are $w',w''\in R^!(w)$ s.t.\ $\mathfrak{M},w'\nVdash^+\phi$ and $\mathfrak{M},w'\nVdash^-\phi$ but $\mathfrak{M},w''\nVdash^+\phi$ and $\mathfrak{M},w''\Vdash^-\phi$.
\end{enumerate}
It is also clear that either $\mathfrak{M},u\Vdash^+\chi$ and $\mathfrak{M},u\Vdash^-\chi$, or $\mathfrak{M},u\nVdash^+\chi$ and $\mathfrak{M},u\Vdash^-\chi$ in all $u\in R^!(w)$.

Again, by an application of Proposition~\ref{prop:dualvaluations}, we have that $\mathfrak{M},u\nVdash^+\chi$ and $\mathfrak{M},u\Vdash^-\chi$ in all $u\in R^!(w)$ which gives $\mathfrak{M}_\partial,w'\Vdash^+\phi$ but $\mathfrak{M}_\partial,w'\nVdash^+\chi$ (for the case $(2.a)$); and $\mathfrak{M}_\partial,w''\Vdash^+\phi$ but $\mathfrak{M}_\partial,w''\nVdash^+\chi$ (for $(2.b)$--$(2.d)$). The result follows.
\end{proof}
\section{Analytic cut system\label{sec:tableaux}}
When it comes to providing a~calculus for $\BD$ or one of its relatives or expansions, there are, usually, two avenues. The first is to provide a~Hilbert-style axiomatisation. This was done in~\cite{Dunn1995} and~\cite{Drobyshevich2020}\footnote{The completeness proof in Dunn's paper contained a~mistake that was addressed in~\cite{Drobyshevich2020}.} for $\KBD$ ($\KFDE$). However, the completeness proofs of such systems can require the introduction of non-normal worlds (cf.~\cite[\S4]{Drobyshevich2020} for a~detailed discussion). The other option is to construct a~tableaux (or analytic cut\footnote{The main difference between tableaux and analytic cut calculi is the presence of the eponymous rule in the latter. In classical logic, this rule internalises the principle of excluded middle and is formulated as $\dfrac{}{\phi\mid\neg\phi}$ with $\phi$ being a~subformula of a~formula occurring on the branch. In general, if the $\{\mathbf{v}_1,\ldots,\mathbf{v}_n\}$ is the set of truth values of a~given logic, the analytic cut rule can be given, for example, as follows: $\dfrac{}{\mathbf{v}_1[\phi]\mid\ldots\mid\mathbf{v}_n[\phi]}$. Our analytic cut rule ($\mathfrak{v}\overline{\mathfrak{v}}$ in Fig.~\ref{fig:Tknowignorules}) is an adaptation of the analytic cut rule of the $\mathbf{RE}_\mathrm{fde}$ calculus from~\cite{DAgostino1990}.}) calculus. This was done, e.g., by Priest in~\cite{Priest2008,Priest2008FromIftoIs} for $\KBD$. Similarly, in~\cite{KozhemiachenkoVashentseva2023}, we presented an analytic cut system for the expansion of $\BD$ with~$\blacktriangle$. The soundness and completeness of tableaux and analytic cut systems are usually straightforward to establish. Moreover, they can be easily expanded to accommodate new connectives and operators which is not trivial when one deals with a~Hilbert calculus.

Thus, in this section, we provide a~unified analytic cut calculus for $\knowBD$ and $\ignoBD$ that is built similarly to the calculus for the expansion of $\BD$ with $\blacktriangle$ from~\cite{KozhemiachenkoVashentseva2023} and augments D'Agostino's analytic cut calculus $\mathbf{RE}_{\mathrm{fde}}$~\cite{DAgostino1990} with additional modal rules. We are using labelled formulas whose labels contain two parts: the value of the formula and the state where the formula has this value.
\begin{definition}[Labelled formulas]\label{def:labelledformulas}
We fix a~countable set of state-labels $\mathsf{Lab}=\{w,w_0,w',\ldots\}$ and the~set of value-labels $\mathsf{Val}=\{\ltrue,\lfalse,\lnontrue,\lnonfalse\}$. A~\emph{labelled formula} is a~construction of the form $\mathsf{w}:\phi;\mathfrak{v}$ with $\phi\in\Ligno\cup\Lknow$, $\mathsf{w}\in\mathsf{Lab}$, and $\mathfrak{v}\in\mathsf{Val}$.
\end{definition}

The interpretations of labelled formulas are summarised in Table~\ref{table:labelsintepretation}.
\begin{table}
\centering
\begin{tabular}{c|c}
\textbf{Labelled formula}&\textbf{Interpretation}\\\hline
$w:\phi;\ltrue$&$\mathfrak{M},w\Vdash^+\phi$\\
$w:\phi;\lfalse$&$\mathfrak{M},w\Vdash^-\phi$\\
$w:\phi;\lnontrue$&$\mathfrak{M},w\nVdash^+\phi$\\
$w:\phi;\lnonfalse$&$\mathfrak{M},w\nVdash^-\phi$
\end{tabular}
\caption{Interpretations of labelled formulas in $\Tknowigno$ proofs.}
\label{table:labelsintepretation}
\end{table}
\begin{convention}\label{conv:conjugatesandinverses}
We set
\begin{align*}
\ltrue^\neg&=\lfalse&\lfalse^\neg&=\ltrue&
\lnontrue^\neg&=\lnonfalse&\lnonfalse^\neg&=\lnontrue
\end{align*}
\end{convention}
\begin{definition}[$\Tknowigno$ --- analytic cut for $\knowBD$ and $\ignoBD$]\label{def:tableaux}
We define an~$\Tknowigno$-proof as a~downward branching tree whose nodes are labelled with sets containing labelled formulas and constructions of the form $w\mathsf{R}w'$. Each branch can be extended by one of the rules from Fig.~\ref{fig:Tknowignorules}. A~branch $\mathcal{B}$ is \emph{closed} iff $w_i\!:\!\phi;\mathfrak{v},w_i\!:\!\phi;\overline{\mathfrak{v}}\in\mathcal{B}$ for some $\phi\in\Lknow\cup\Ligno$, $w_i\in\mathsf{Lab}$, and $\mathfrak{v}\in\mathsf{Val}$. Otherwise, $\mathcal{B}$ is \emph{open}. An open branch $\mathcal{B}$ is \emph{complete} iff the following condition is met.
\begin{itemize}
\item[$*$] If all premises of a~rule occur on the branch, then at least one conclusion of that rule occurs on the branch as well.
\end{itemize}
A tree is closed iff every branch is closed. Finally, we say that \emph{$\phi\vdash\chi$ is proved in $\Tknowigno$} iff there is a~closed tree whose root is $\{w\!:\!\phi;\ltrue,~w\!:\!\chi;\lnontrue\}$.
\end{definition}
\begin{figure}
\begin{align*}
\mathfrak{v}\overline{\mathfrak{v}}:\dfrac{}{w:\phi;\mathfrak{v}\mid w:\phi;\overline{\mathfrak{v}}}~\left(\parbox{15em}{$\phi$ is a~subformula of a~formula occurring on the branch; $w$ occurs on the branch}\right)
\end{align*}
\begin{align*}
\neg_\mathfrak{v}:\dfrac{w\!:\!\neg\phi;\mathfrak{v}}{w\!:\!\phi;\mathfrak{v}^\neg}&&\wedge_\ltrue:\dfrac{w\!:\!\phi\wedge\chi;\ltrue}{\begin{matrix}w\!:\!\phi;\ltrue\\w\!:\!\chi;\ltrue\end{matrix}}&&\vee_\ltrue:\dfrac{\begin{matrix}w:\phi_1\vee\phi_2;\ltrue\\w:\phi_i;\lnontrue\end{matrix}}{w:\phi_j;\ltrue}&&\wedge_{\lnonfalse}:\dfrac{w\!:\!\phi\wedge\chi;\lnonfalse}{\begin{matrix}w\!:\!\phi;\lnonfalse\\w\!:\!\chi;\lnonfalse\end{matrix}}&&\vee_{\lnonfalse}:\dfrac{\begin{matrix}w:\phi_1\vee\phi_2;\lnonfalse\\w:\phi_i;\lfalse\end{matrix}}{w:\phi_j;\lnonfalse}\\
&&\wedge_\lfalse:\dfrac{\begin{matrix}w:\phi_1\wedge\phi_2;\lfalse\\w:\phi_i;\lnonfalse\end{matrix}}{w:\phi_j;\lfalse}&&\vee_\lfalse:\dfrac{w\!:\!\phi\vee\chi;\lfalse}{\begin{matrix}w\!:\!\phi;\lfalse\\w\!:\!\chi;\lfalse\end{matrix}}&&\wedge_{\lnontrue}:\dfrac{\begin{matrix}w:\phi_1\wedge\phi_2;\lnontrue\\w:\phi_i;\ltrue;\end{matrix}}{w:\phi_j;\lnontrue}&&\vee_{\lnontrue}:\dfrac{w\!:\!\phi\vee\chi;\lnontrue}{\begin{matrix}w\!:\!\phi;\lnontrue\\w\!:\!\chi;\lnontrue\end{matrix}}
\end{align*}
\begin{align*}
\blacksquare_\ltrue:\dfrac{\begin{matrix}w\!:\!\blacksquare\phi;\ltrue\\w\mathsf{R}w'\end{matrix}}{w'\!:\!\phi;\ltrue}&&\blacksquare^-_\ltrue:\dfrac{\begin{matrix}w\!:\!\blacksquare\phi;\ltrue&w'\!:\!\phi;\lfalse\\w\mathsf{R}w'&w\mathsf{R}w''\end{matrix}}{w''\!:\!\phi;\lfalse}&&\blacksquare_\lfalse:\dfrac{w\!:\!\blacksquare\phi;\lfalse}{\left.\begin{matrix}w\mathsf{R}u\\u\!:\!\phi;\lfalse\end{matrix}\right|\begin{matrix}w\mathsf{R}u&w\mathsf{R}u'\\u\!:\!\phi;\ltrue&u'\!:\!\phi;\lnontrue\end{matrix}}\\
\blacksquare_{\lnonfalse}:\dfrac{\begin{matrix}w\!:\!\blacksquare\phi;\lnonfalse\\w\mathsf{R}w'\end{matrix}}{w'\!:\!\phi;\lnonfalse}&&\blacksquare^+_{\lnonfalse}:\dfrac{\begin{matrix}w\!:\!\blacksquare\phi;\lnontrue&w'\!:\!\phi;\ltrue\\w\mathsf{R}w'&w\mathsf{R}w''\end{matrix}}{w''\!:\!\phi;\ltrue}&&\blacksquare_{\lnontrue}:\dfrac{w\!:\!\blacksquare\phi;\lnontrue}{\left.\begin{matrix}w\mathsf{R}u\\u\!:\!\phi;\lnontrue\end{matrix}\right|\begin{matrix}w\mathsf{R}u&w\mathsf{R}u'\\u\!:\!\phi;\lfalse&u'\!:\!\phi;\lnonfalse\end{matrix}}
\end{align*}
\begin{align*}
\mathbf{I}_\ltrue:\dfrac{w\!:\!\mathbf{I}\phi;\ltrue}{w\!:\!\phi;\ltrue}&&\mathbf{I}^\mathsf{R}_\ltrue:\dfrac{\begin{matrix}w\!:\!\mathbf{I}\phi;\ltrue\\w\mathsf{R}s\end{matrix}}{s\!:\!\phi;\lfalse}&&\mathbf{I}^+_\ltrue:\dfrac{\begin{matrix}w\!:\!\mathbf{I}\phi;\ltrue&s\!:\!\phi;\ltrue\\w\mathsf{R}s&w\mathsf{R}s'\end{matrix}}{s'\!:\!\phi;\ltrue}&&\mathbf{I}_\lfalse:\dfrac{w\!:\!\mathbf{I}\phi;\lfalse}{\left.\begin{matrix}w\mathsf{R}u\\u\!:\!\phi;\ltrue\end{matrix}\right|\left.\begin{matrix}w\mathsf{R}u&w\mathsf{R}u'\\u\!:\!\phi;\lfalse&u'\!:\!\phi;\lnonfalse\end{matrix}\right|w\!:\!\phi;\lfalse}\\
\mathbf{I}_{\lnonfalse}:\dfrac{w\!:\!\mathbf{I}\phi;\lnonfalse}{w\!:\!\phi;\lnonfalse}&&\mathbf{I}^\mathsf{R}_\ltrue:\dfrac{\begin{matrix}w\!:\!\mathbf{I}\phi;\lnonfalse\\w\mathsf{R}s\end{matrix}}{s\!:\!\phi;\lnontrue}&&\mathbf{I}^+_{\lnonfalse}:\dfrac{\begin{matrix}w\!:\!\mathbf{I}\phi;\lnonfalse&s\!:\!\phi;\lnonfalse\\w\mathsf{R}s&w\mathsf{R}s'\end{matrix}}{s'\!:\!\phi;\lnonfalse}&&\mathbf{I}_{\lnontrue}:\dfrac{w\!:\!\mathbf{I}\phi;\lnontrue}{\left.\begin{matrix}w\mathsf{R}u\\u\!:\!\phi;\lnonfalse\end{matrix}\right|\left.\begin{matrix}w\mathsf{R}u&w\mathsf{R}u'\\u\!:\!\phi;\ltrue&u'\!:\!\phi;\lnontrue\end{matrix}\right|w\!:\!\phi;\lnontrue}
\end{align*}
\caption{$\Tknowigno$ rules: vertical bars denote branching; $\mathfrak{v}\in\mathsf{Val}$, $u$ and $u'$ are fresh on the branch, $w\neq s$, $w\neq s'$, $i\neq j$, $i,j\in\{1,2\}$.}
\label{fig:Tknowignorules}
\end{figure}

Before proceeding to the proof of soundness and completeness of $\Tknowigno$, let us consider two examples of tableaux proofs (Fig.~\ref{fig:failedproof}). Namely, we prove $\mathbf{I}p\wedge\mathbf{I}q\vdash\mathbf{I}(p\vee q)$ (recall~\eqref{equ:ignoaxioms}) and disprove $\blacksquare(p\wedge q)\vdash\blacksquare p$ (cf.~Remark~\ref{rem:knownonstandard}). For the sake of brevity, we do not apply the $\mathfrak{v}\overline{\mathfrak{v}}$ rule at $w_0$ as it is clear that these applications will not make the open branch closed. Note that the step \textbf{higlighed in boldface} is obtained from $w_0:\blacksquare(p\wedge q);\ltrue$, $w_1:p\wedge q;\lfalse$, $w_0\mathsf{R}w_1$, and $w_0\mathsf{R}w_2$ by $\blacksquare^-_\ltrue$ rule. To extract the counter-model from a~complete open branch $\mathcal{B}$, we set $W=\{w:w\text{ occurs on }\mathcal{B}\}$, set $wRw'$ iff $w\mathsf{R}w'\in\mathcal{B}$, and define valuations $v^+$ and $v^-$ according to Table~\ref{table:labelsintepretation}.
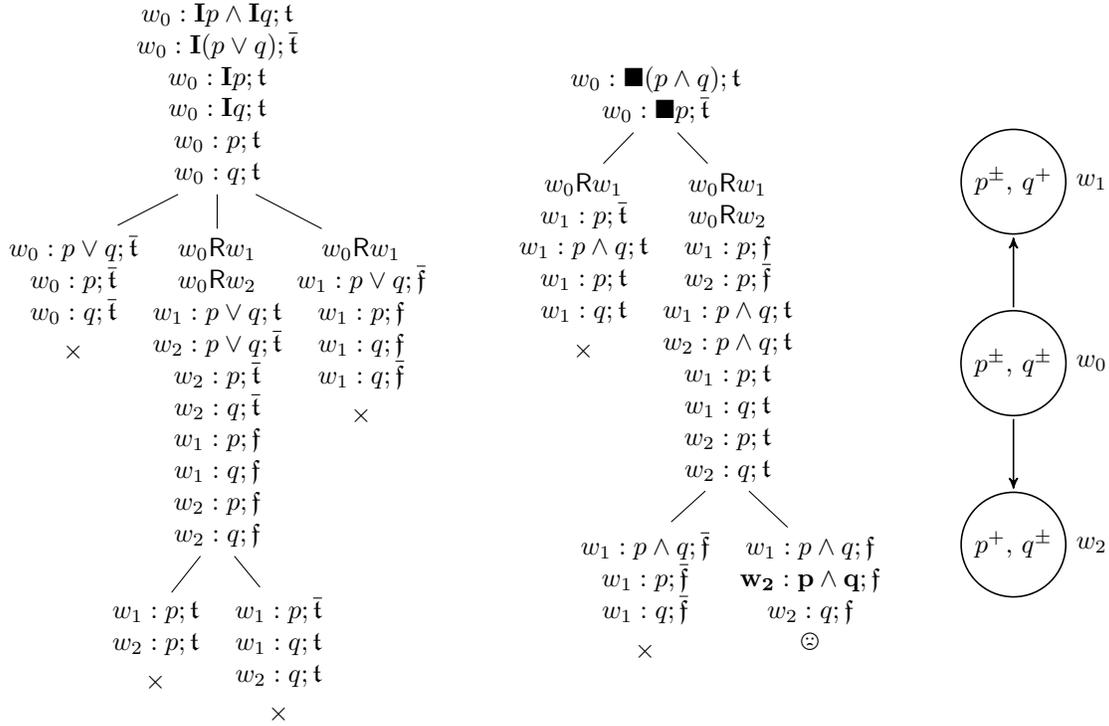
\begin{figure}
\centering
\begin{minipage}{.4\textwidth}
\centering
\begin{forest}
smullyan tableaux
[w_0:\mathbf{I}p\wedge\mathbf{I}q;\ltrue
[w_0:\mathbf{I}(p\vee q);\lnontrue
[w_0:\mathbf{I}p;\ltrue
[w_0:\mathbf{I}q;\ltrue
[w_0:p;\ltrue[w_0:q;\ltrue
[w_0:p\vee q;\lnontrue[w_0:p;\lnontrue[w_0:q;\lnontrue,closed]]]
[w_0\mathsf{R}w_1[w_0\mathsf{R}w_2[w_1:p\vee q;\ltrue[w_2:p\vee q;\lnontrue[w_2:p;\lnontrue[w_2:q;\lnontrue[w_1:p;\lfalse[w_1:q;\lfalse[w_2:p;\lfalse[w_2:q;\lfalse
[w_1:p;\ltrue[w_2:p;\ltrue,closed]][w_1:p;\lnontrue[w_1:q;\ltrue[w_2:q;\ltrue,closed]]]
]]]]]]]]]]
[w_0\mathsf{R}w_1[w_1:p\vee q;\lnonfalse[w_1:p;\lfalse[w_1:q;\lfalse[w_1:q;\lnonfalse,closed]]]]]
]]]]]]
\end{forest}
\end{minipage}
\hfill
\begin{minipage}{.3\textwidth}
\centering
\begin{forest}
smullyan tableaux
[w_0:\blacksquare(p\wedge q);\ltrue
[w_0:\blacksquare p;\lnontrue
[w_0\mathsf{R}w_1[w_1:p;\lnontrue[w_1:p\wedge q;\ltrue[w_1:p;\ltrue[w_1:q;\ltrue,closed]]]]]
[w_0\mathsf{R}w_1
[w_0\mathsf{R}w_2
[w_1:p;\lfalse
[w_2:p;\lnonfalse
[w_1:p\wedge q;\ltrue[w_2:p\wedge q;\ltrue
[w_1:p;\ltrue[w_1:q;\ltrue[w_2:p;\ltrue[w_2:q;\ltrue
[w_1:p\wedge q;\lnonfalse[w_1:p;\lnonfalse[w_1:q;\lnonfalse,closed]]][w_1:p\wedge q;\lfalse[\mathbf{w_2:p\wedge q;\lfalse}[w_2:q;\lfalse[\frownie]]]]
]]]]]]]]]]]]
\end{forest}
\end{minipage}
\hfill
\begin{minipage}{.2\textwidth}
\centering
\begin{tikzpicture}[modal,node distance=1cm,world/.append style={minimum
size=1cm}]
\node[world] (w0) [label=right:{$w_0$}] {$p^\pm$, $q^\pm$};
\node[world] (w1) [above=of w0] [label=right:{$w_1$}] {$p^\pm$, $q^+$};
\node[world] (w2) [below=of w0] [label=right:{$w_2$}] {$p^+$, $q^\pm$};
\path[->] (w0) edge (w1);
\path[->] (w0) edge (w2);
\end{tikzpicture}
\end{minipage}
\caption{An $\Tknowigno$ proof of $\mathbf{I}p\wedge\mathbf{I}q\vdash\mathbf{I}(p\vee q)$ (left); a~failed proof of $\blacksquare(p\wedge q)\vdash\blacksquare p$ (center, $\frownie$~denotes the complete open branch) and its corresponding model (right).}
\label{fig:failedproof}
\end{figure}

We are now ready to state and prove that $\Tknowigno$ is sound and complete w.r.t.\ the semantics in Definition~\ref{def:ignoknowsemantics}. Our proof is a~straightforward adaptation of~\cite[Theorems~3.7 and~3.8]{KozhemiachenkoVashentseva2023}.
\begin{definition}[Branch realisation]\label{def:branchrealisation}
We say that $\mathfrak{M}=\langle W,R,v^+,v^-\rangle$ with $W=\{w:w\text{ occurs on }\mathcal{B}\}$, $R=\{\langle w_i,w_j\rangle:w_i\mathsf{R}w_j\in\mathcal{B}\}$, and $w\in v^+(p)$ ($w\in v^-(p)$) iff $w\!:\!p;\ltrue\in\mathcal{B}$ ($w\!:\!p;\lfalse\in\mathcal{B}$) realises a~branch $\mathcal{B}$ of a~tree iff the following conditions are met.
\begin{enumerate}
\item If $w\!:\!\phi;\ltrue\in\mathcal{B}$ ($w\!:\!\phi;\lfalse\in\mathcal{B}$), then $\mathfrak{M},w\Vdash^+\phi$ ($\mathfrak{M},w\Vdash^-\phi$, respectively).
\item If $w\!:\!\phi;\lnontrue\in\mathcal{B}$ ($w\!:\!\phi;\lnonfalse\in\mathcal{B}$), then $\mathfrak{M},w\nVdash^+\phi$ ($\mathfrak{M},w\nVdash^-\phi$, respectively).
\end{enumerate}
\end{definition}
\begin{theorem}[Soundness and completeness of $\Tknowigno$]\label{theorem:Tknowignocompleteness}
For every $\phi\vdash\chi$ s.t.\ $\phi,\chi\in\Lknow\cup\Ligno$, it holds that $\phi\vdash\chi$ is valid iff it has a~$\Tknowigno$ proof.
\end{theorem}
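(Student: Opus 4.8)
The plan is to establish both implications by contraposition, following the standard template for analytic cut systems. For soundness I will show that if $\phi\vdash\chi$ fails then no closed tree exists; for completeness I will show that if $\phi\vdash\chi$ has no closed tree then it fails. The pivot in both cases is the notion of a branch being \emph{realisable} in the sense of Definition~\ref{def:branchrealisation}: a model $\mathfrak{M}$ together with a map from the state-labels occurring on the branch to states of $\mathfrak{M}$ such that every $w\mathsf{R}w'$ on the branch is witnessed by the accessibility relation and every labelled formula holds under the reading of Table~\ref{table:labelsintepretation}.

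For soundness I would prove the invariant that each rule in Figure~\ref{fig:Tknowignorules} preserves realisability downwards: if the premises of a rule are satisfied by a model (via some label assignment), then at least one conclusion is as well, extending the assignment to any fresh labels $u,u'$. The propositional rules are those of D'Agostino's $\mathbf{RE}_{\mathrm{fde}}$, and the cut rule $\mathfrak{v}\overline{\mathfrak{v}}$ is trivially realisability-preserving (any state either supports truth of a subformula or does not). The only genuinely new work is the modal block: each branching conclusion matches, clause by clause, a disjunct of the support conditions for $\blacksquare\phi$ and $\mathbf{I}\phi$ in Definition~\ref{def:ignoknowsemantics}. For instance, the two branches of $\blacksquare_\lfalse$ mirror the two disjuncts of $\mathfrak{M},w\Vdash^-\blacksquare\phi$, while $\blacksquare^-_\ltrue$ encodes the ``same Belnapian value'' requirement inside the truth condition, and the side condition $w\neq s$ in the $\mathbf{I}$-rules forces the reasoning over $R^!(w)$ rather than $R(w)$. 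Since a closed branch contains $w\!:\!\phi;\mathfrak{v}$ and $w\!:\!\phi;\overline{\mathfrak{v}}$, which cannot both hold, no closed branch is realisable; hence a closed tree forces the root $\{w\!:\!\phi;\ltrue,\,w\!:\!\chi;\lnontrue\}$ to be unrealisable, i.e.\ there is no countermodel, so $\phi\vdash\chi$ is valid.

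For completeness I would assume there is no closed tree and extract a complete open branch $\mathcal{B}$ (satisfying condition $*$ of Definition~\ref{def:tableaux}), then form the canonical model $\mathfrak{M}$ of Definition~\ref{def:branchrealisation} and prove the \emph{Branch Realisation Lemma}: $\mathfrak{M}$ realises $\mathcal{B}$. This is an induction on the complexity of the labelled formula. The base case for variables holds by the definition of $v^+,v^-$; the propositional cases use the corresponding decomposition rules together with the cut rule to guarantee that, once the status of a compound formula is fixed on $\mathcal{B}$, the statuses of the immediate subformulas needed to justify it are present too. The modal cases invoke condition $*$: e.g.\ if $w\!:\!\blacksquare\phi;\lfalse\in\mathcal{B}$ then one conclusion-block of $\blacksquare_\lfalse$ lies on $\mathcal{B}$, and the induction hypothesis applied to $\phi$ at the relevant $R$-successors delivers exactly the semantic condition for $\mathfrak{M},w\Vdash^-\blacksquare\phi$; the $\mathbf{I}$-cases are identical, with $w\neq s$ keeping us inside $R^!(w)$. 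Applying the lemma to the root gives $\mathfrak{M},w\Vdash^+\phi$ and $\mathfrak{M},w\nVdash^+\chi$, contradicting validity.

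I expect the main obstacle to be twofold. First, the induction in the Branch Realisation Lemma must be arranged so that the analytic cut $\mathfrak{v}\overline{\mathfrak{v}}$ genuinely supplies every labelled formula the inductive step requires: in this paracomplete and paraconsistent setting a subformula may be neither true nor false, or both, so one cannot infer ``non-true'' from ``false'', and $\mathcal{B}$ must literally contain all the warranted statuses; checking that condition $*$ together with the cut rule achieves this is the delicate point. Second, the rules $\blacksquare_\lfalse$, $\blacksquare_\lnontrue$, $\mathbf{I}_\lfalse$, $\mathbf{I}_\lnontrue$ introduce fresh states, so to secure a complete open branch I must argue that the expansion process terminates (or otherwise yields a complete branch), using the subformula property and the bounded modal depth of $\phi$ and $\chi$ to bound the states ever generated. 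Beyond these, the argument is a routine but lengthy case analysis matching each rule against Definition~\ref{def:ignoknowsemantics}, exactly as in Theorems~3.7 and~3.8 of~\cite{KozhemiachenkoVashentseva2023}.
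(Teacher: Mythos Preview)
Your proposal is correct and follows essentially the same approach as the paper's proof: soundness via the invariant that every rule preserves branch realisability (so a closed tree forces the root to be unrealisable), and completeness by contraposition, building the canonical model of Definition~\ref{def:branchrealisation} from a complete open branch and proving by induction on formulas that it realises the branch. The paper's appendix carries out exactly this argument, focusing on the $\blacksquare$ and $\mathbf{I}$ cases; your additional remarks on termination and on the role of $\mathfrak{v}\overline{\mathfrak{v}}$ in supplying the needed sublabels are points the paper leaves implicit, but they do not constitute a different route.
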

\begin{proof}
The proof is in the appendix (Section~\ref{sec:completenessproof}).
\end{proof}
\section{Expressivity\label{sec:expressivity}}
In Section~\ref{sec:preliminaries}, we argued that the $\Box$ modality as it is often defined in $\BD$ (recall Definition~\ref{def:KBD}) is not well-suited for the formalisation of belief, knowledge, or ignorance in the Belnap--Dunn logic. In this section, we show that $\LBox$ formulas, actually, cannot formalise our interpretation of $\blacksquare$ and $\mathbf{I}$ because neither of these modalities can be defined via $\Box$. In addition, we also show that neither $\blacksquare$, nor $\mathbf{I}$ can define $\Box$ and that $\mathbf{I}$ and $\blacksquare$ are not interdefinable either. This last property of $\mathbf{I}$ and $\blacksquare$ corresponds to a~desideratum in~\cite{KubyshkinaPetrolo2021} stating that knowledge and ignorance should be independent notions.
\begin{definition}\label{def:formuladefinability}
Let $\mathcal{L}_1$ and $\mathcal{L}_2$ be two languages and let $\mathbb{K}$ be a~class of frames. We say that $\phi\in\mathcal{L}_1$ \emph{defines} $\chi\in\mathcal{L}_2$ \emph{in $\mathbb{K}$} iff for any $\mathfrak{F}\in\mathbb{K}$ and for any pointed model $\langle\mathfrak{M},w\rangle$ on $\mathfrak{F}$, it holds that
\begin{align*}
\mathfrak{M},w\Vdash^+\phi&\text{ iff }\mathfrak{M},w\Vdash^+\chi&
\mathfrak{M},w\Vdash^-\phi&\text{ iff }\mathfrak{M},w\Vdash^-\chi
\end{align*}
\end{definition}
\begin{theorem}\label{theorem:ignoknowundefinable}~
\begin{enumerate}
\item No $\LBox$ formula can define $\blacksquare p$ on the classes of all frames, all reflexive frames, all transitive frames, all symmetric frames, and all Euclidean frames.
\item No $\LBox$ formula can define $\mathbf{I}p$ on the classes of all frames, all reflexive frames, all transitive frames, all symmetric frames, and all Euclidean frames.
\end{enumerate}
\end{theorem}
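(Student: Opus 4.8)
The plan is to isolate a structural property that every $\LBox$-formula enjoys but that $\blacksquare p$ and $\mathbf{I}p$ both lack, and then exploit it uniformly across the five frame classes. The property I would use is \emph{monotonicity with respect to the information order}. Call $\mathfrak{M}'=\langle W,R,v'^+,v'^-\rangle$ an \emph{information extension} of $\mathfrak{M}=\langle W,R,v^+,v^-\rangle$ (on the same frame) when $v^+(q)\subseteq v'^+(q)$ and $v^-(q)\subseteq v'^-(q)$ for every variable $q$. First I would prove, by a routine induction on $\phi\in\LBox$, that information extensions preserve both supports: $\mathfrak{M},w\Vdash^+\phi$ implies $\mathfrak{M}',w\Vdash^+\phi$, and likewise for $\Vdash^-$. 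The atomic and $\{\neg,\wedge,\vee\}$ cases are the standard $\BD$ persistence argument, and the only new case, $\Box$, goes through at once: its truth clause ($\forall w'\in R(w):\mathfrak{M},w'\Vdash^+\phi$) and its falsity clause ($\exists w'\in R(w):\mathfrak{M},w'\Vdash^-\phi$) are both upward-preserved when the supports at the successors grow.

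The key observation is then that neither $\blacksquare$ nor $\mathbf{I}$ is monotone in this sense, because the truth clauses of both (see Definition~\ref{def:ignoknowsemantics}) contain a \emph{uniformity} requirement that adding information can destroy. For item~1 I would take $\mathfrak{M}$ on the frame with a single point $w$ seeing two distinct points $u_1,u_2$, with $u_1:p^+$ and $u_2:p^+$; then $\mathfrak{M},w\Vdash^+\blacksquare p$, since $p$ is exactly true at every successor. The information extension $\mathfrak{M}'$ obtained by adding $u_2$ to $v^-(p)$ only (so that $u_2:p^\pm$) satisfies $\mathfrak{M}',w\nVdash^+\blacksquare p$, because the support of falsity of $p$ is no longer uniform across $R(w)$. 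For item~2 I would take $w:p^+$ seeing $u_1:p^-$ and $u_2:p^-$, so that $\mathfrak{M},w\Vdash^+\mathbf{I}p$, and then add $u_2$ to $v^+(p)$ (making $u_2:p^\pm$); now $\mathfrak{M}',w\nVdash^+\mathbf{I}p$, as the support of truth of $p$ has ceased to be uniform across $R^!(w)$. To cover the remaining classes I would keep these two valuations fixed and only redecorate the accessibility relation: add all loops for the reflexive case (taking $w:p^+$ so the loop does not spoil the $\blacksquare$-witness), add the converse edges $u_iRw$ for the symmetric case, leave $R$ as is for the transitive case (the relevant successors are endpoints, so transitivity holds vacuously), and close $\{u_1,u_2\}$ into a full cluster for the Euclidean case. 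In every case $R(w)$ and $R^!(w)$ are left unchanged, so the evaluations of $\blacksquare p$ and $\mathbf{I}p$ at $w$ are exactly as above, and $\mathfrak{M}'$ remains an information extension of $\mathfrak{M}$ on a frame of the required kind.

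Finally, I would assemble the contradiction. If some $\psi\in\LBox$ defined $\blacksquare p$ (resp.\ $\mathbf{I}p$) on one of these classes, then $\mathfrak{M},w\Vdash^+\blacksquare p$ would give $\mathfrak{M},w\Vdash^+\psi$, and by the monotonicity lemma $\mathfrak{M}',w\Vdash^+\psi$ would follow; but $\mathfrak{M}',w\nVdash^+\blacksquare p$, contradicting that $\psi$ and $\blacksquare p$ share the support of truth. I expect the two genuinely load-bearing steps to be the monotonicity lemma (in particular handling the falsity clause of $\Box$ correctly) and the bookkeeping that each decorated frame truly belongs to the intended class while leaving $R(w)$ and $R^!(w)$ — hence the flipped modal value — intact; everything else is direct evaluation against Definition~\ref{def:ignoknowsemantics}.
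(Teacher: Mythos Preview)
Your argument is correct and takes a genuinely different route from the paper's. The paper treats the two items by separate devices: for $\blacksquare p$ it simply invokes the fact that $\blacksquare$ defines $\blacktriangle$ together with the earlier undefinability of $\blacktriangle$ from $\Box$ on $\mathbf{S5}$ frames; for $\mathbf{I}p$ it exhibits two pointed models on \emph{different} $\mathbf{S5}$ frames (a single reflexive point versus a two-element cluster, with $p^+$ everywhere) and shows by an induction that $\LBox$-formulas cannot distinguish them while $\mathbf{I}p$ does. Your approach instead isolates a single structural invariant --- upward persistence of both $\Vdash^+$ and $\Vdash^-$ under information extensions on a \emph{fixed} frame --- that every $\LBox$-formula enjoys (because none of the clauses in Definition~\ref{def:KBD} is negative) but that the uniformity conjuncts in the truth conditions of $\blacksquare$ and $\mathbf{I}$ violate. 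This is self-contained (no appeal to the $\blacktriangle$ result from~\cite{KozhemiachenkoVashentseva2023}), handles both items by the same mechanism, and the monotonicity lemma is a reusable tool for other undefinability questions in $\LBox$.

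Two small points. First, you can avoid the per-class redecoration altogether: a single three-point $\mathbf{S5}$ cluster $\{w,u_1,u_2\}$ with $w:p^+$ already lies in all five classes, and your two counterexamples go through there verbatim since $R(w)=\{w,u_1,u_2\}$ and $R^!(w)=\{u_1,u_2\}$. Second, your sentence ``$R(w)$ and $R^!(w)$ are left unchanged'' is not literally true in the reflexive case (where $w$ enters $R(w)$); you anticipate this correctly by stipulating $w:p^+$, but the phrasing should be adjusted.
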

\begin{proof}
Since $\blacksquare$ can define $\blacktriangle$ (Theorem~\ref{theorem:knowisgood}) and since $\Box$ cannot define $\blacktriangle$ on $\mathbf{S5}$ frames as shown in~\cite[Theorem~4.3]{KozhemiachenkoVashentseva2023}, the first part follows immediately. Let us now prove the second part.

For this, we borrow the approach from~\cite[\S3]{KubyshkinaPetrolo2021} and consider two models in Fig.~\ref{fig:ignoundefinable}. It is clear that the accessibility relations in these models are, in fact, equivalence relations and that $\mathfrak{M},w_0\Vdash^+\mathbf{I}p$ but $\mathfrak{M},w_0\nVdash^+\mathbf{I}p$. However, we can show by induction that
\begin{itemize}
\item for every $\phi\in\LBox$ s.t.\ $\mathfrak{M},w_0\Vdash^+\phi$, it holds that $\mathfrak{M}',w'_0\Vdash^+\phi$ and $\mathfrak{M}',w'_1\Vdash^+\phi$, and
\item for every $\phi\in\LBox$ s.t.\ $\mathfrak{M},w_0\Vdash^-\phi$, it holds that $\mathfrak{M}',w'_0\Vdash^-\phi$ $\mathfrak{M}',w'_1\Vdash^-\phi$.
\end{itemize}

The basis case of variables holds by the construction of $\mathfrak{M}$ and $\mathfrak{M}'$. The cases of propositional connectives can be established by induction hypothesis. We consider $\phi=\Box\chi$ and let $\mathfrak{M},w_0\Vdash^+\Box\chi$. Then, $\mathfrak{M},w_0\Vdash^+\chi$. By the induction hypothesis, we have $\mathfrak{M}',w'_0\Vdash^+\chi$ and $\mathfrak{M}',w'_1\Vdash^+\chi$, whence, $\mathfrak{M}',w'_0\Vdash^+\Box\chi$, as required. Now let $\mathfrak{M},w_0\Vdash^-\Box\chi$. Hence, $\mathfrak{M},w_0\Vdash^-\chi$. By the induction hypothesis, we have $\mathfrak{M}',w'_0\Vdash^-\chi$ and $\mathfrak{M}',w'_1\Vdash^-\chi$, whence, $\mathfrak{M}',w'_0\Vdash^-\Box\chi$, as required.

The result follows.
\begin{figure}
\centering
\begin{tikzpicture}[modal,node distance=1cm,world/.append style={minimum
size=1cm}]
\node[world] (w0) [label=below:{$w_0$}] {$p^+$};
\node[] [left=of w0] {$\mathfrak{M}$:};
\path[->] (w0) edge[reflexive] (w0);
\end{tikzpicture}
\hfil
\begin{tikzpicture}[modal,node distance=1cm,world/.append style={minimum
size=1cm}]
\node[world] (w0) [label=below:{$w'_0$}] {$p^+$};
\node[world] (w1) [right=of w0] [label=below:{$w'_1$}] {$p^+$};
\node[] [left=of w0] {$\mathfrak{M}'$:};
\path[->] (w0) edge[reflexive] (w0);
\path[->] (w1) edge[reflexive] (w1);
\path[<->] (w0) edge (w1);
\end{tikzpicture}
\caption{All variables have the same value exemplified by $p$.}
\label{fig:ignoundefinable}
\end{figure}
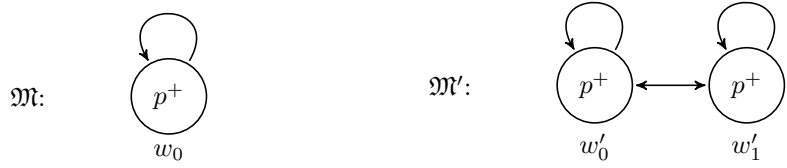
\end{proof}

\begin{theorem}\label{theorem:Boxundefinable}
There is no formula $\phi\in\Lknow\cup\Ligno$ that defines $\Box p$ on the classes of all frames, all reflexive frames, all transitive frames, all symmetric frames, and all Euclidean frames.
\end{theorem}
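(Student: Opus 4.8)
The plan is to refute definability in all five classes at once by producing a single pair of pointed models that agree on every $\Lknow\cup\Ligno$ formula at the evaluation point yet disagree on $\Box p$. Since the universal relation on a finite set is an equivalence relation, a model built on such a frame lives simultaneously in the classes of all frames, all reflexive, all transitive, all symmetric, and all Euclidean frames; hence one counterexample over an $\mathbf{S5}$ frame settles every case. I would take the two models to be a model $\mathfrak{M}$ and its dual $\mathfrak{M}_\partial$ (Definition~\ref{dualvaluationsdefinition}), exploiting the fact that, by Proposition~\ref{prop:dualvaluations}, dualisation leaves every $\Lknow\cup\Ligno$ formula unchanged exactly at those states where it already has a \emph{classical} value ($\true$ or $\false$), while it may still flip the value of $\Box p$, which is not an $\Lknow\cup\Ligno$ formula.

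Concretely, I would let $\mathfrak{M}$ be the universal three-world frame on $\{w_0,w_1,w_2\}$ with $w_0:p^+$, $w_1:p^+$, and $w_2:p^\pm$ (all variables taking these values). Then $p$ is true at every accessible state but false at $w_2$, so $\mathfrak{M},w_0\Vdash^+\Box p$ and $\mathfrak{M},w_0\Vdash^-\Box p$. In $\mathfrak{M}_\partial$ the state $w_2$ becomes $\xcancel{p}$, so $p$ is no longer true everywhere and is false nowhere; hence $\mathfrak{M}_\partial,w_0\nVdash^+\Box p$ and $\mathfrak{M}_\partial,w_0\nVdash^-\Box p$. Thus $\Box p$ genuinely differs at $w_0$ across the two models, and by Proposition~\ref{prop:dualvaluations} it remains only to prove that \emph{every} $\phi\in\Lknow\cup\Ligno$ takes a classical value at $w_0$ in $\mathfrak{M}$: for then $\mathfrak{M},w_0$ and $\mathfrak{M}_\partial,w_0$ agree on $\phi$, so no such $\phi$ can track $\Box p$, and the failure occurs over an $\mathbf{S5}$ frame common to all five classes.

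The heart of the argument is therefore the inductive claim that every $\phi\in\Lknow\cup\Ligno$ is exactly true or exactly false at \emph{both} $w_0$ and $w_1$ in $\mathfrak{M}$. The propositional cases are immediate, since $\{\true,\false\}$ is closed under $\neg,\wedge,\vee$. For the modal cases the main obstacle is to verify that neither $\blacksquare$ nor $\mathbf{I}$ reintroduces a glut or a gap at $w_0$: because $w_0$ and $w_1$ are classical while $w_2$ is paradoxical, the value of a subformula is never uniform across the cluster unless it is uniformly classical, so $\blacksquare\psi$ and the $\blacksquare^!$-part of $\mathbf{I}\psi$ (recall~\eqref{equ:ignobox!}) collapse either to the shared classical value or to $\false$. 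The delicate point is $\mathbf{I}$, which is governed by $R^!(w_0)$ rather than $R(w_0)$: here the presence of the \emph{second} classical state $w_1$ is essential, because it makes $R^!(w_0)=\{w_1,w_2\}$ non-uniform and so forces the relevant subformula to $\false$ through the uniformity clause; with a single non-classical successor $\mathbf{I}$ would read off $w_2$ directly and produce a non-classical value at $w_0$, breaking the induction. Once this classicality lemma is in place, the theorem follows for all five frame classes simultaneously.
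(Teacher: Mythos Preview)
Your proposal is correct and, in fact, uses the same three-world universal model as the paper (up to relabelling the states). The difference lies in how the contradiction is extracted. The paper stays inside a single model and proves by induction that no $\phi\in\Lknow\cup\Ligno$ can be \emph{both} true and false simultaneously at each of the two classical states; since $\Box p$ has value $\both$ at each of them, this already rules out any defining $\phi$. You instead compare the model with its dual and invoke Proposition~\ref{prop:dualvaluations}: your inductive invariant is the stronger statement that every $\phi$ takes a \emph{classical} value at the two $p^+$-states, so duality fixes $\phi$ at $w_0$ while it flips $\Box p$ from $\both$ to $\neither$. Your route buys a clean packaging via the duality machinery already available in the paper, at the price of having to rule out gaps as well as gluts in the induction; the paper's weaker invariant is enough because it only needs to exclude the specific glut that $\Box p$ exhibits. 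Both arguments hinge on having \emph{two} classical states so that $R^!$ of each contains the other classical state, which is exactly the observation you isolate as ``the delicate point'' for the $\mathbf{I}$ case.
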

\begin{proof}
We consider Fig.~\ref{fig:Boxundefinable}. It is clear that its accessibility relation is an equivalence relation and that $\mathfrak{M},w_0\Vdash^+\Box p$ and $\mathfrak{M},w_0\Vdash^-\Box p$ as well as $\mathfrak{M},w_2\Vdash^+\Box p$ and $\mathfrak{M},w_2\Vdash^-\Box p$. We show that there is no $\phi\in\Lknow\cup\Ligno$ s.t.\
\begin{enumerate}
\item $\mathfrak{M},w_0\Vdash^+\phi$ and $\mathfrak{M},w_0\Vdash^-\phi$, and
\item $\mathfrak{M},w_2\Vdash^+\phi$ and $\mathfrak{M},w_2\Vdash^-\phi$.
\end{enumerate}
We proceed by induction on $\phi$ and reason for a~contradiction.

If $\phi=p$, it is clear that $\mathfrak{M},w_0\nVdash^-p$ and $\mathfrak{M},w_2\nVdash^-p$. If $\phi=\neg\chi$, assume that $\mathfrak{M},w_0\Vdash^+\neg\chi$ and $\mathfrak{M},w_0\Vdash^-\neg\chi$. Then, $\mathfrak{M},w_0\Vdash^+\chi$ and $\mathfrak{M},w_0\Vdash^-\chi$. A~contradiction. If $\phi=\chi\wedge\psi$, again, assume that $\mathfrak{M},w_0\Vdash^+\chi\wedge\psi$ and $\mathfrak{M},w_0\Vdash^-\chi\wedge\psi$. Then, $\mathfrak{M},w_0\Vdash^+\chi$, $\mathfrak{M},w_0\Vdash^+\psi$ and either $\mathfrak{M},w_0\Vdash^-\chi$ or $\mathfrak{M},w_0\Vdash^-\psi$. In both cases, we have a~contradiction. The case of $\phi=\chi\vee\psi$ can be proven in the same fashion.

Let $\phi=\blacksquare\chi$. If $\mathfrak{M},w_0\Vdash^+\blacksquare\chi$ and $\mathfrak{M},w_0\Vdash^-\blacksquare\chi$ as well as $\mathfrak{M},w_2\Vdash^+\blacksquare\chi$ and $\mathfrak{M},w_2\Vdash^-\blacksquare\chi$, then $\mathfrak{M},w_0\Vdash^+\chi$ and $\mathfrak{M},w_0\Vdash^-\chi$ (and $\mathfrak{M},w_2\Vdash^+\chi$ and $\mathfrak{M},w_2\Vdash^-\chi$, as well). But by the inductive hypothesis, there cannot be $\chi\in\Lknow\cup\Ligno$ s.t.\ $\mathfrak{M},w_0\Vdash^+\chi$, $\mathfrak{M},w_0\Vdash^-\chi$, $\mathfrak{M},w_2\Vdash^+\chi$, and $\mathfrak{M},w_2\Vdash^-\chi$. Again, a~contradiction.

Finally, consider $\phi=\mathbf{I}\chi$ and assume that $\mathfrak{M},w_0\Vdash^+\mathbf{I}\chi$ and $\mathfrak{M},w_0\Vdash^-\mathbf{I}\chi$, $\mathfrak{M},w_2\Vdash^+\mathbf{I}\chi$, and $\mathfrak{M},w_2\Vdash^-\mathbf{I}\chi$. Then, we must have that $\mathfrak{M},u\Vdash^+\chi$ and $\mathfrak{M},u\Vdash^-\chi$ for all $u\in R^!(w_0)$ and also $\mathfrak{M},u'\Vdash^+\chi$ and $\mathfrak{M},u'\Vdash^-\chi$ for all $u\in R^!(w_2)$. But since $w_0\in R^!(w_2)$ and $w_2\in R^!(w_0)$, we again have a~contradiction. The result follows.
\begin{figure}
\centering
\begin{tikzpicture}[modal,node distance=0.5cm,world/.append style={minimum
size=1cm}]
\node[world] (w0) [label=below:{$w_0$}] {$p^+$};
\node[world] (w1) [right=of w0] [label=below:{$w_1$}] {$p^\pm$};
\node[world] (w2) [left=of w0] [label=below:{$w_2$}] {$p^+$};
\node[] [left=of w2] {$\mathfrak{M}$:};
\path[->] (w0) edge[reflexive] (w0);
\path[->] (w2) edge[reflexive] (w2);
\path[->] (w1) edge[reflexive] (w1);
\path[<->] (w0) edge (w1);
\path[<->] (w0) edge (w2);
\path[<->] (w1) edge[bend left=60] (w2);
\end{tikzpicture}
\caption{All variables have the same values exemplified by $p$.}
\label{fig:Boxundefinable}
\end{figure}
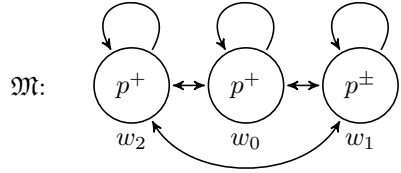
\end{proof}
\begin{theorem}\label{theorem:ignovsknow}~
\begin{enumerate}
\item No $\Lknow$ formula can define $\mathbf{I}p$ on the classes of all frames, all reflexive frames, all transitive frames, all symmetric frames, and all Euclidean frames.
\item No $\Ligno$ formula can define $\blacksquare p$ on the classes of all frames, all transitive frames, all symmetric frames, and all Euclidean frames.
\end{enumerate}
\end{theorem}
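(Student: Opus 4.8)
The plan is to prove each non-definability by exhibiting, for the relevant frame classes, a pair of pointed models that the ``defining'' language cannot tell apart but on which the target modality takes different values; by Definition~\ref{def:formuladefinability} a single such pair rules out every candidate defining formula at once.

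For item~1 I would reuse the two models of Fig.~\ref{fig:ignoundefinable} already constructed for Theorem~\ref{theorem:ignoknowundefinable}: the one-point equivalence frame $\mathfrak{M}$ and the two-point equivalence frame $\mathfrak{M}'$ in which every state verifies $p$ as exactly true. Being equivalence relations, both lie in all five listed classes (including the reflexive one). The key point is that every variable is classically valued throughout, so by Remark~\ref{rem:ignobox!} the operator $\blacksquare$ collapses to the classical box on these models; an easy induction (mirroring the $\Box$-case of Theorem~\ref{theorem:ignoknowundefinable}) then shows that every $\Lknow$-formula takes the same value at $w_0$ in $\mathfrak{M}$ and at $w'_0$ (and $w'_1$) in $\mathfrak{M}'$. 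Yet $\mathbf{I}p$ separates them: $R^!(w_0)=\varnothing$ forces $\mathfrak{M},w_0\Vdash^+\mathbf{I}p$, whereas $p$ being non-false at $w'_1\in R^!(w'_0)$ gives $\mathfrak{M}',w'_0\nVdash^+\mathbf{I}p$. Hence no $\Lknow$-formula matches $\mathbf{I}p$.

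For item~2 the guiding observation is that the truth and falsity clauses for $\mathbf{I}$ in Definition~\ref{def:ignoknowsemantics} refer only to $R^!$ and never to $R$ itself, so by induction the value of any $\Ligno$-formula at a state depends solely on $R^!$ and the valuation. I would then take two one-point models with the same valuation, both verifying $p$ as false and non-true: $\mathfrak{M}$ with a reflexive loop and $\mathfrak{M}'$ with the empty relation. They share the same (empty) $R^!$ and the same valuation, hence agree on every $\Ligno$-formula; but $\blacksquare p$ is exactly false at the reflexive point (since $p$ is false and non-true there) and exactly true at the isolated point (vacuously), so $\blacksquare p$ separates them. Both frames are transitive, symmetric, and Euclidean, covering the four classes named in item~2.

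The main obstacle --- and the reason item~2 omits the reflexive class --- is exactly that the separating pair must sit inside \emph{every} target class simultaneously. The reflexive singleton versus the isolated singleton achieves this for ``all'', transitive, symmetric, and Euclidean frames, but the isolated point is not reflexive, so the construction cannot be repaired for reflexive frames; this is consistent with $\blacksquare$ and $\mathbf{I}$ being more tightly linked there (cf.\ Theorem~\ref{theorem:knowisgood}, item~2). The remaining work is routine: verifying the ``$\Ligno$ depends only on $R^!$'' induction through the $\mathbf{I}$-clauses, and checking in item~1 that classicality is preserved under $\blacksquare$ so that the reduction to the $\Box$-argument of Theorem~\ref{theorem:ignoknowundefinable} is legitimate.
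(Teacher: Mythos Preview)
Your proposal is correct and follows essentially the same route as the paper: for item~1 you reuse the models of Fig.~\ref{fig:ignoundefinable} and extend the induction to $\blacksquare$ (exactly as the paper does, since classicality makes $\blacksquare$ behave like $\Box$); for item~2 you take a reflexive singleton versus an isolated singleton and exploit that $\mathbf{I}$ only sees $R^!$, which is precisely the paper's Fig.~\ref{fig:ignovsknow} argument. The only cosmetic difference is your choice of valuation in item~2 ($p^-$ rather than the paper's $p^\pm$), which works just as well; your explicit formulation of the ``$\Ligno$ depends only on $R^!$'' lemma is in fact cleaner than the paper's terse ``one can check by induction''.
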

\begin{proof}
The first part can be proven in the same manner as \emph{the second part} of Theorem~\ref{theorem:ignoknowundefinable}: we use the models Fig.~\ref{fig:ignoundefinable} and show that 
\begin{itemize}
\item for every $\phi\in\Lknow$ s.t.\ $\mathfrak{M},w_0\Vdash^+\phi$, it holds that $\mathfrak{M}',w'_0\Vdash^+\phi$ and $\mathfrak{M}',w'_1\Vdash^+\phi$, and
\item for every $\phi\in\Lknow$ s.t.\ $\mathfrak{M},w_0\Vdash^-\phi$, it holds that $\mathfrak{M}',w'_0\Vdash^-\phi$ $\mathfrak{M}',w'_1\Vdash^-\phi$.
\end{itemize}

For the second part, we borrow the approach from~\cite[Observation~1.25]{GilbertKubyshkinaPetroloVenturi2022}. Namely, consider Fig.~\ref{fig:ignovsknow}. It is easy to see that $\mathfrak{M},w_0\Vdash^-\blacksquare p$ but $\mathfrak{M}',w'_0\nVdash^-\blacksquare p$. On the other hand, one can check by induction on $\phi\in\Ligno$ that $\mathfrak{M},w_0\Vdash^+\phi$ iff $\mathfrak{M}',w'_0\Vdash^+\phi$ and $\mathfrak{M},w_0\Vdash^-\phi$ iff $\mathfrak{M}',w'_0\Vdash^-\phi$.
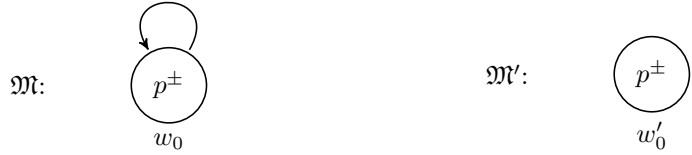
\begin{figure}
\centering
\begin{tikzpicture}[modal,node distance=1cm,world/.append style={minimum
size=1cm}]
\node[world] (w0) [label=below:{$w_0$}] {$p^\pm$};
\node[] [left=of w0] {$\mathfrak{M}$:};
\path[->] (w0) edge[reflexive] (w0);
\end{tikzpicture}
\hfil
\begin{tikzpicture}[modal,node distance=1cm,world/.append style={minimum
size=1cm}]
\node[world] (w0) [label=below:{$w'_0$}] {$p^\pm$};
\node[] [left=of w0] {$\mathfrak{M}'$:};
\end{tikzpicture}
\caption{All variables in both models have the same values exemplified by $p$.}
\label{fig:ignovsknow}
\end{figure}
\end{proof}
\begin{remark}\label{rem:Boxundefinable}
Note from the proof of Theorem~\ref{theorem:Boxundefinable} that not only $\blacksquare$ and $\mathbf{I}$ \emph{by themselves} cannot define~$\Box$ but even \emph{the language that combines both $\blacksquare$ and $\mathbf{I}$} cannot define $\Box$.
\end{remark}

We finish the section with a~brief discussion of the unknown truth operator $\bullet$. Recall from Definition~\ref{def:ignoknowsemantics} that $\bullet\phi$ is defined as $\phi\wedge\neg\blacksquare\phi$. The next statement shows that it cannot be defined using~$\Box$.
\begin{theorem}\label{theorem:accidenceundefinable}
There is no formula $\phi\in\LBox$ that defines $\bullet p$ on the classes of all frames, all reflexive frames, all transitive frames, all symmetric frames, and all Euclidean frames.
\end{theorem}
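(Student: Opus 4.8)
The plan is to reuse the directed-simulation technique of Theorems~\ref{theorem:ignoknowundefinable}, \ref{theorem:Boxundefinable}, and~\ref{theorem:ignovsknow}: I would exhibit two pointed models, both built on \emph{equivalence relations} (hence lying simultaneously in every class named in the statement), whose $\LBox$-theories are included one-directionally in one another but which disagree on $\bullet p$. The semantic fact I would exploit is that, since $\bullet p\coloneqq p\wedge\neg\blacksquare p$, the support of truth of $\bullet p$ at $w$ is sensitive to whether $p$ has a \emph{uniform} Belnapian value across $R(w)$ --- which $\Box$ cannot see --- and that inserting a truth-value \emph{gap} among the successors only ever \emph{removes} support of truth from $\Box$-formulas (a gap world supports neither truth nor falsity of $p$, so it can make a $\forall$-condition fail but can never witness one).

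Concretely, I would take $\mathfrak{M}$ to be the two-point cluster $\{w_0,b\}$ (a full equivalence relation) with $w_0:p^+$ and $b:\xcancel{p}$, and $\mathfrak{M}'$ to be the single reflexive point $w'_0:p^+$; every variable other than $p$ is assigned one fixed classical value at every world of both models so that it plays no role. First I would compute the target from Definition~\ref{def:ignoknowsemantics}. At $w_0$ the gap world $b$ falsifies $\blacksquare p$ by value-disagreement (some successor supports truth of $p$, some does not), so $\mathfrak{M},w_0\Vdash^-\blacksquare p$ and $\mathfrak{M},w_0\nVdash^+\blacksquare p$, whence $\mathfrak{M},w_0\Vdash^+\bullet p$. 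At $w'_0$ the unique successor carries the uniform value $\true$, so $\mathfrak{M}',w'_0\Vdash^+\blacksquare p$ and therefore $\mathfrak{M}',w'_0\nVdash^+\bullet p$.

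The heart of the argument is a preservation lemma proved by induction on $\phi\in\LBox$, with the inductive hypothesis strengthened so that \emph{both} worlds of $\mathfrak{M}$ are simulated by $w'_0$: for $x\in\{w_0,b\}$, if $\mathfrak{M},x\Vdash^+\phi$ then $\mathfrak{M}',w'_0\Vdash^+\phi$, and if $\mathfrak{M},x\Vdash^-\phi$ then $\mathfrak{M}',w'_0\Vdash^-\phi$. The propositional cases are routine (componentwise, with contraposition for $\neg$). For $\phi=\Box\chi$ I would use that $R(w'_0)=\{w'_0\}$: support of truth of $\Box\chi$ at $w_0$ or $b$ forces $\chi$ at every successor, and the induction hypothesis then yields $\chi$ at $w'_0$, hence $\Box\chi$ there; dually, support of falsity of $\Box\chi$ is witnessed by some successor $x\in\{w_0,b\}$ with $\mathfrak{M},x\Vdash^-\chi$, and the strengthened hypothesis transfers this falsity to $w'_0$. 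With the lemma in hand, a hypothetical $\psi\in\LBox$ defining $\bullet p$ would give $\mathfrak{M},w_0\Vdash^+\psi$, hence $\mathfrak{M}',w'_0\Vdash^+\psi$, hence $\mathfrak{M}',w'_0\Vdash^+\bullet p$, contradicting the computation above.

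The step I expect to be the main obstacle is exactly the $\Box$-case, where the falsity of $\Box\chi$ at $w_0$ may be witnessed by the auxiliary gap world $b$ rather than by $w_0$; this is precisely why the hypothesis must cover $b$ as well, with $b$ (like $w_0$) simulated by the single reflexive point $w'_0$. Note that $\mathfrak{M}$ and $\mathfrak{M}'$ are deliberately \emph{not} bisimilar --- they differ on $\Box p$ --- so only the one-directional inclusion is available, and the direction must be chosen so that the positive information about $\bullet p$ at $w_0$ is propagated to $w'_0$. An alternative, more economical route I would mention is a reduction to Theorem~\ref{theorem:ignoknowundefinable}: at any world where $p$ is exactly true, $\bullet p$ and $\neg\blacksquare p$ agree in both support of truth and support of falsity, so a $\Box$-definition $\psi$ of $\bullet p$ would make $\neg\psi$ a $\Box$-definition of $\blacksquare p$ on such models; the catch is that one must then check that the undefinability of $\blacksquare p$ survives the restriction to $p$-exactly-true models, which the direct construction above avoids.
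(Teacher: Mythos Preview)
Your argument is correct and uses the very same pair of models as the paper (a single reflexive point with $p^{+}$ versus a two-point cluster with $p^{+}$ and $\xcancel{p}$); you merely swap the names of $\mathfrak{M}$ and~$\mathfrak{M}'$. The conclusion is also reached in the same way: $\bullet p$ distinguishes the two $p^{+}$ points while no $\LBox$-formula does.

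Where you diverge is in the bookkeeping of the induction. The paper's invariant transfers only the \emph{exact} Belnapian values $\true$ and $\false$ from the $p^{+}$ point of the two-world model to the single-world model; this forces it to prove two auxiliary facts along the way (that no $\LBox$-formula is ever $\both$ at either world of the two-world model, and that exactly-true/exactly-false status at the gap world entails the same status at the $p^{+}$ world) in order to handle the case splits inside~$\wedge$ and~$\lozenge$. Your invariant instead transfers $\Vdash^{+}$ and $\Vdash^{-}$ \emph{separately} and covers \emph{both} worlds $w_0$ and $b$ of the two-world model simultaneously. Because the gap world satisfies neither $\Vdash^{+}p$ nor $\Vdash^{-}p$, its base case is vacuous, and the $\Box$-clause goes through in one line since any falsity witness for $\Box\chi$ in $\{w_0,b\}$ is already covered by the strengthened hypothesis. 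This yields the same theorem with less case analysis; the paper's route buys nothing extra here, so your packaging is a genuine simplification of the same idea.

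One small remark on your side comment: the ``alternative reduction'' to Theorem~\ref{theorem:ignoknowundefinable} is not straightforward, since Definition~\ref{def:formuladefinability} requires agreement on \emph{all} models of the frame class, not just those where $p$ is exactly true everywhere, so the restriction caveat you flag is indeed fatal to that shortcut. Your direct argument is the right one.
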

\begin{proof}
The proof is essentially the same as that of~\cite[Theorem~4.3]{KozhemiachenkoVashentseva2023} and is put in the appendix (Section~\ref{sec:accidenceproof}).
\end{proof}
\section{Frame definability\label{sec:definability}}
We have previously argued (Section~\ref{sec:preliminaries} and Theorem~\ref{theorem:knowisgood}) that $\blacksquare$ modality is well-suited to formalise knowledge and belief in the Belnap--Dunn framework: it entails “knowledge whether” and “opinionatedness” (in contrast to $\Box$); three standard axioms of epistemic modalities (truthfulness, positive introspection, and negative introspection) are valid on $\mathbf{S5}$ frames; $\mathbf{S5}$ frames are definable in $\Lknow$. In this section, we further investigate which important classes of frames are definable in $\Lknow$.

Recall the notion of frame definability.
\begin{definition}[Frame definability]
Let $\Sigma$ be a~set of sequents of the form $\phi\vdash\chi$ and $\mathbb{F}$ be a~class of frames. We say that $\Sigma$ \emph{defines} $\mathbb{F}$ iff for any frame $\mathfrak{F}$, $\mathfrak{F}\in\mathbb{F}$ iff all sequents from $\Sigma$ are valid on~$\mathfrak{F}$. A~class of frames is definable in $\Lknow$ iff there is a~set of sequents where $\phi,\chi\in\Lknow$ that defines it.
\end{definition}

Let $\mathfrak{F}=\langle W,R\rangle$ be a~frame. We will use the notation given in Table~\ref{table:classesofframes} to designate different classes of frames.
\begin{table}
\centering
\begin{tabular}{c|c}
\textbf{notation}&\textbf{class of frames}\\\hline
$\mathbf{D}$&$R$ is serial: $\forall x\exists y:R(x,y)$\\
$\mathbf{dn}$&$R$ is dense: $\forall x,y:R(x,y)\Rightarrow\exists z(R(x,z)~\&~R(z,y))$\\
$\mathbf{T}$&$R$ is reflexive\\
$\mathbf{5}$&$R$ is Euclidean: $\forall x,y,z:R(x,y)~\&~R(x,z)\Rightarrow R(y,z)$
\end{tabular}
\caption{Shorthands for the classes of frames.}
\label{table:classesofframes}
\end{table}

In the remainder of this section, we are going to show that all classes of frames given in Table~\ref{table:classesofframes} are definable. Note that the definability of $\mathbf{T}$- and $\mathbf{S5}$-frames can be obtained indirectly since they are definable using $\blacktriangle$ (which, in its turn, is definable via $\blacksquare$). However, we are going to show that the $\Lknow$-definitions of all these classes of frames are identical to their usual definitions up to the replacement of $\blacksquare$ with $\Box$, $\blacklozenge$ with $\lozenge$, and the classical implication with $\vdash$.
\begin{theorem}\label{theorem:definability}
All classes of frames given in Table~\ref{table:classesofframes} are definable in $\Lknow$.
\end{theorem}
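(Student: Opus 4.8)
The plan is to prove, for each of the four classes, that the sequent obtained from the corresponding classical modal axiom by the substitution described after Table~\ref{table:classesofframes} defines it. The candidates are $\blacksquare p\vdash p$ for $\mathbf{T}$, $\blacksquare p\vdash\blacklozenge p$ for $\mathbf{D}$, $\blacksquare\blacksquare p\vdash\blacksquare p$ for $\mathbf{dn}$, and $\blacklozenge p\vdash\blacksquare\blacklozenge p$ for $\mathbf{5}$. For each, frame definability splits into a \emph{soundness} direction (if $\mathfrak{F}$ belongs to the class, the sequent is valid on $\mathfrak{F}$) and a \emph{completeness} direction (if $\mathfrak{F}$ violates the frame condition, I build a pointed model on $\mathfrak{F}$ refuting the sequent). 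Throughout, I would read off the truth/falsity conditions for $\blacklozenge\phi=\neg\blacksquare\neg\phi$ from Definition~\ref{def:ignoknowsemantics}, and reuse Theorem~\ref{theorem:knowisgood} wherever possible.

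For the soundness directions, $\mathbf{T}$ and $\mathbf{5}$ come essentially for free: the proof of Theorem~\ref{theorem:knowisgood}(3) establishes $\blacksquare p\vdash p$ invoking only reflexivity and $\blacklozenge p\vdash\blacksquare\blacklozenge p$ invoking only that $R$ is Euclidean, so I would simply point to that argument (the key fact it exploits is that for Euclidean $R$ all successors of $w$ share the same successor set, making $\blacklozenge p$ constant on $R(w)$). The seriality case is immediate: if $R(w)\neq\varnothing$ and $\mathfrak{M},w\Vdash^+\blacksquare p$, then any successor witnesses the first disjunct of the truth condition for $\blacklozenge p$. The density case is the substantive one. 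Assuming $R$ dense and $\mathfrak{M},w\Vdash^+\blacksquare\blacksquare p$, the support-of-truth half of $\blacksquare p$ at $w$ follows directly: for $v\in R(w)$, density gives $z$ with $R(w,z)$ and $R(z,v)$, and $\mathfrak{M},z\Vdash^+\blacksquare p$ forces $\mathfrak{M},v\Vdash^+p$. The support-of-falsity (uniformity) half is the obstacle, as it has no classical analogue: if some $v_1\in R(w)$ has $\mathfrak{M},v_1\Vdash^-p$, density produces $z_1\in R(w)$ with $v_1\in R(z_1)$, whence $\mathfrak{M},z_1\Vdash^-\blacksquare p$; the falsity-uniformity clause of $\blacksquare\blacksquare p$ then propagates $\Vdash^-\blacksquare p$ to every $w'\in R(w)$, and combining $\Vdash^+\blacksquare p$ with $\Vdash^-\blacksquare p$ at each such $w'$ forces $p$ to be \emph{both true and false} throughout $R(w')$. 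A final density step places every $v_2\in R(w)$ inside some such $R(w')$, giving $\mathfrak{M},v_2\Vdash^-p$ and hence uniformity.

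For the completeness directions I negate the frame condition and construct a refuting model on the same frame, keeping variables classical except where a glut is forced, so the non-standard clauses stay transparent. If $R$ is not reflexive, pick $w\notin R(w)$, make $p$ exactly true on all of $R(w)$ and non-true at $w$: then $\mathfrak{M},w\Vdash^+\blacksquare p$ but $\mathfrak{M},w\nVdash^+p$. If $R$ is not serial, take $w$ with $R(w)=\varnothing$, so $\blacksquare p$ is vacuously true and $\blacklozenge p$ vacuously non-true at $w$. If $R$ is not Euclidean, fix $R(x,y),R(x,z)$ but not $R(y,z)$, and set $v^+(p)=\{z\}$, $v^-(p)=W\setminus\{z\}$; then $z\in R(x)$ witnesses $\mathfrak{M},x\Vdash^+\blacklozenge p$, while $z\notin R(y)$ makes every successor of $y$ exactly false, so $\mathfrak{M},y\nVdash^+\blacklozenge p$ and the truth-half of $\blacksquare\blacklozenge p$ fails at $x$. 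The delicate counter-model is again density: if $R$ is not dense, fix $R(x,y)$ with no $z$ satisfying $R(x,z)~\&~R(z,y)$, and let $p$ be exactly true precisely on the two-step reachable set $T=\{u:\exists w'(R(x,w')~\&~R(w',u))\}$ and exactly false off $T$. Non-density gives $y\notin T$, so $\mathfrak{M},x\nVdash^+\blacksquare p$; whereas every $R(w')$ with $w'\in R(x)$ lies inside $T$, making $\mathfrak{M},w'\Vdash^+\blacksquare p$ and $\mathfrak{M},w'\nVdash^-\blacksquare p$ for all $w'\in R(x)$, i.e.\ $\mathfrak{M},x\Vdash^+\blacksquare\blacksquare p$. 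I expect the density soundness argument — marshalling the falsity-uniformity clauses of the nested $\blacksquare$ — to be the main point requiring care; the other cases are routine adaptations of the classical correspondence-theory counter-models.
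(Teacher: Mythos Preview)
Your proposal is correct and follows the paper's approach: for $\mathbf{T}$, $\mathbf{D}$, and $\mathbf{5}$ you choose the same defining sequents, reuse Theorem~\ref{theorem:knowisgood} for the soundness of $\mathbf{T}$ and $\mathbf{5}$ exactly as the paper does, and your counter-models for the completeness directions are minor variants of the paper's. The only divergence is $\mathbf{dn}$: the paper uses $\blacklozenge p\vdash\blacklozenge\blacklozenge p$, whereas you use $\blacksquare\blacksquare p\vdash\blacksquare p$; since contraposition is sound (Remark~\ref{rem:equivalence}) and $\blacklozenge=\neg\blacksquare\neg$, the two sequents define the same class of frames, and your falsity-uniformity propagation argument and two-step-reachability counter-model are correct $\blacksquare$-side duals of the paper's $\blacklozenge$-side arguments.
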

\begin{proof}~

\fbox{$\mathbf{D}$ frames}\\
We show that $\mathfrak{F}\in\mathbf{D}$ iff $\mathfrak{F}\models[\blacksquare p\vdash\blacklozenge p]$. Let $\mathfrak{F}$ be serial, and $\mathfrak{M}$ be a~model on $\mathfrak{F}$ s.t.\ $\mathfrak{M},w\Vdash^+\blacksquare p$. We show that $\mathfrak{M},w\Vdash^+\blacklozenge p$. Since $\mathfrak{F}$ is serial, we have that there is some $w'\in R(w)$. Moreover, in every $w''\in R(w)$, it holds that $\mathfrak{M},w''\Vdash^+p$. Thus, $\mathfrak{M},w\Vdash^+\blacklozenge p$, as required.

For the converse, let $\mathfrak{F}\notin\mathbf{D}$ and $w\in\mathfrak{F}$ be s.t.\ $R(w)=\varnothing$. It is clear that for every model $\mathfrak{M}$ on~$\mathfrak{F}$, we have $\mathfrak{M},w\Vdash^+\blacksquare p$ but $\mathfrak{M},w\nVdash^+\blacklozenge p$.

\fbox{$\mathbf{T}$ frames}\\
We show that $\mathfrak{F}\in\mathbf{T}$ iff $\mathfrak{F}\models[\blacksquare p\vdash p]$. Let $\mathfrak{F}$ be reflexive and $\mathfrak{M}$ be a~model on $\mathfrak{M}$ s.t.\ $\mathfrak{M},w\Vdash^+\blacksquare p$. Since $w\in R(w)$, it is clear that $\mathfrak{M},w\Vdash^+p$, as required. For the converse, let $\mathfrak{F}\notin\mathbf{T}$ and $w\in\mathfrak{F}$ be s.t.\ $w\notin R(w)$. Now, for every $w'\in R(w)$, we set $w'\in v^+(p)$ and $w'\notin v^-(p)$. For $w$, we define $w\notin v^+(p)$ and $w\notin v^-(p)$. It is clear that $\mathfrak{M},w\Vdash^+\blacksquare p$ but $\mathfrak{M},w\nVdash^+p$.

\fbox{$\mathbf{dn}$ frames}\\
We show that $\mathfrak{F}\in\mathbf{dn}$ iff $\mathfrak{F}\models[\blacklozenge p\vdash\blacklozenge\blacklozenge p]$. Let $\mathfrak{F}$ be dense and $\mathfrak{M},w\Vdash^+\blacklozenge p$ for some model $\mathfrak{M}$ on $\mathfrak{F}$. We have two cases:
\begin{enumerate}
\item there is $w'\in R(w)$ s.t.\ $\mathfrak{M},w'\Vdash^+p$, or
\item there are $u,u'\in R(w)$ s.t.\ $\mathfrak{M},u\Vdash^-p$ but $\mathfrak{M},u'\nVdash^-p$.
\end{enumerate}
In the first case, there is a~state $w''$ s.t.\ $w''Rw'$ and $wRw''$. Thus, $\mathfrak{M},w''\!\Vdash^+\!\blacklozenge p$, whence $\mathfrak{M},w\Vdash^+\blacklozenge\blacklozenge p$, as required. In the second case, there are $t$ and $t'$ s.t.\ $wRtRu$ and $wRt'Ru'$, whence $\mathfrak{M},t'\nVdash^-\blacklozenge p$. It now suffices to show that $\mathfrak{M},t\Vdash^+\blacklozenge p$ or $\mathfrak{M},t\Vdash^-\blacklozenge p$.

Assume for contradiction that $\mathfrak{M},t\nVdash^+\blacklozenge p$ and $\mathfrak{M},t\nVdash^-\blacklozenge p$. Then $\mathfrak{M},s\nVdash^+p$ and $\mathfrak{M},s\nVdash^-p$ in all $s\in R(t)$ and $\mathfrak{M},u\nVdash^-p$, in particular. A~contradiction. Now, we have that $\mathfrak{M},t\Vdash^+\blacklozenge p$ or $\mathfrak{M},t\Vdash^-\blacklozenge p$ from where (since $\mathfrak{M},t'\nVdash^-\blacklozenge p$) we obtain that $\mathfrak{M},w\Vdash^+\blacklozenge\blacklozenge p$, as required.

For the converse, let $\mathfrak{F}$ be not dense and $w,w'\in\mathfrak{F}$ be s.t.\ $wRw'$ but for no $wRu$ and $uRw'$. We set the valuations as follows: $w'\in v^+(p)$ and $w'\notin v^-(p)$; $t\notin v^+(p)$ and $t\in v^-(p)$ for all $t\neq w'$. It is clear that $\mathfrak{M},w\Vdash^+\blacklozenge p$ but $\mathfrak{M},s\nVdash^+\blacklozenge p$ and $\mathfrak{M},s\Vdash^-\blacklozenge p$ for all $s\in R(w)$. Thus, $\mathfrak{M},w\nVdash^+\blacklozenge\blacklozenge p$, as required.

\fbox{$\mathbf{5}$ frames}\\
Observe from the proof of Theorem~\ref{theorem:knowisgood} that if $\mathfrak{F}$ is Euclidean, then $\mathfrak{F}\models[\blacklozenge p\Vdash\blacksquare\blacklozenge p]$. We show the converse direction. Assume that $\mathfrak{F}\notin\mathbf{5}$ and that $w_0Rw_1$, $w_0Rw_2$, but $w_2\notin R(w_1)$. We set the valuation as follows: $w'\notin v^+(p)$ and $w'\in v^-(p)$ for all $w'\in R(w_1)$; $w\in v^+(p)$ and $w\notin v^-(p)$ for all other $w$'s. It is clear that $\mathfrak{M},w_0\Vdash^+\blacklozenge p$ (since $w_2\notin R(w_1)$), but $\mathfrak{M},w_1\nVdash^+\blacklozenge p$ and $\mathfrak{M},w_1\Vdash^-\blacklozenge p$. Thus, $\mathfrak{M},w_0\nVdash^+\blacksquare\blacklozenge p$, as required.
\end{proof}

From the above theorem, it follows immediately that $\mathbf{D5}$ (serial and Euclidean), $\mathbf{S5}$ (reflexive and Euclidean), and $\mathbf{Ddn}$ (serial and dense) frames are definable. Note, however, that the standard definition of transitive frames --- $\blacksquare p\vdash\blacksquare\blacksquare p$ --- does not hold (although, it is still open whether transitive frames are definable in $\Lknow$). Indeed, consider Fig.~\ref{fig:4counterexample}. The expected definition fails because, on one hand, $\blacksquare\phi$ is exactly true at $w$ when $R(w)=\varnothing$, and, on the other hand, for $\blacksquare\phi$ to be true at a~given state, $\phi$ has to have the same Belnapian value in all accessible states. These two conditions can be at odds in \emph{non-serial} transitive models as Fig.~\ref{fig:4counterexample} shows.

Still, some \emph{classes of transitive frames} are definable in the expected manner.
\begin{figure}
\centering
\begin{tikzpicture}[modal,node distance=0.5cm,world/.append style={minimum
size=1cm}]
\node[world] (w) [label=below:{$w$}] {$p^\pm$};
\node[world] (w') [right=of w] [label=below:{$w'$}] {$p^\pm$};
\node[world] (w'') [right=of w'] [label=below:{$w''$}] {$p^\pm$};
\node[] [left=of w] {$\mathfrak{M}$:};
\path[->] (w) edge (w');
\path[->] (w') edge (w'');
\path[->] (w) edge[bend left=40] (w'');
\end{tikzpicture}
\caption{$\mathfrak{M},w\Vdash^+\blacksquare p$ but $\mathfrak{M},w\nVdash^+\blacksquare\blacksquare p$.}
\label{fig:4counterexample}
\end{figure}
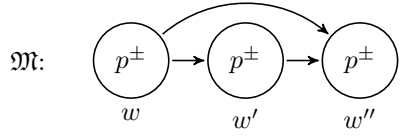
\begin{table}
\centering
\begin{tabular}{c|c}
\textbf{notation}&\textbf{class of frames}\\\hline
$\mathbf{45}$&$R$ is transitive and Euclidean\\
$\mathbf{D4}$&$R$ is serial and transitive\\
$\mathbf{S4}$&$R$ is reflexive and transitive
\end{tabular}
\caption{Shorthands for classes of transitive frames.}
\label{table:transitiveframes}
\end{table}
\begin{theorem}\label{theorem:transitivedefinability}
The classes of frames in Table~\ref{table:transitiveframes} are definable in $\Lknow$.
\end{theorem}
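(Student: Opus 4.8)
The plan is to define each class by pairing the sequent $\blacksquare p\vdash\blacksquare\blacksquare p$ (the expected counterpart of transitivity) with the sequent already shown in Theorem~\ref{theorem:definability} to define the accompanying property. Concretely, I would show that $\mathbf{S4}$ is defined by $\{\blacksquare p\vdash p,\ \blacksquare p\vdash\blacksquare\blacksquare p\}$, that $\mathbf{D4}$ is defined by $\{\blacksquare p\vdash\blacklozenge p,\ \blacksquare p\vdash\blacksquare\blacksquare p\}$, and that $\mathbf{45}$ is defined by $\{\blacklozenge p\vdash\blacksquare\blacklozenge p,\ \blacksquare p\vdash\blacksquare\blacksquare p\}$. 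In each case the reflexivity, seriality, or Euclideanness clause is handled directly by Theorem~\ref{theorem:definability}, so essentially all of the work concentrates on the behaviour of $\blacksquare p\vdash\blacksquare\blacksquare p$.

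For the backward (completeness) direction I would first isolate one class-independent lemma: \emph{if $\blacksquare p\vdash\blacksquare\blacksquare p$ is valid on $\mathfrak{F}$, then $\mathfrak{F}$ is transitive}. Arguing by contraposition, I take $w_0Rw_1$ and $w_1Rw_2$ but not $w_0Rw_2$, and define a valuation making $p$ exactly true on all of $R(w_0)$ and non-true at $w_2$; this is consistent precisely because $w_2\notin R(w_0)$. Then $\mathfrak{M},w_0\Vdash^+\blacksquare p$, while $\mathfrak{M},w_1\nVdash^+\blacksquare p$ (as $w_2\in R(w_1)$ carries a non-true $p$), so $\mathfrak{M},w_0\nVdash^+\blacksquare\blacksquare p$. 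Combined with the converse halves of Theorem~\ref{theorem:definability} for $\mathbf{T}$, $\mathbf{D}$, and $\mathbf{5}$, this shows that the validity of each pair of sequents forces the corresponding class.

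The forward (soundness) direction is the main obstacle, and it is exactly where the failure recorded in Fig.~\ref{fig:4counterexample} must be circumvented. I would prove the validity of $\blacksquare p\vdash\blacksquare\blacksquare p$ on serial transitive frames as follows: assume $\mathfrak{M},w\Vdash^+\blacksquare p$ and split on whether $\blacksquare p$ is \emph{exactly true} or \emph{both true and false} at $w$. In the exactly-true case, transitivity alone gives $R(w')\subseteq R(w)$ for each $w'\in R(w)$, so $p$ remains exactly true throughout $R(w')$ and $\blacksquare p$ is exactly true at every $w'\in R(w)$, yielding $\blacksquare\blacksquare p$. The delicate case is when $\blacksquare p$ is both true and false at $w$: one checks this forces $p$ to be both true and false at \emph{every} state of $R(w)$ with $R(w)\neq\varnothing$, and to propagate ``both true and false'' to each $w'\in R(w)$ one needs $R(w')\neq\varnothing$, supplied by seriality. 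This is precisely the step that Fig.~\ref{fig:4counterexample} shows cannot succeed without a seriality assumption. Since reflexivity implies seriality, the same argument covers $\mathbf{S4}$, and the serial case covers $\mathbf{D4}$.

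For $\mathbf{45}$ I would instead exploit the stronger structural fact that on a transitive Euclidean frame $R(w')=R(w)$ for every $w'\in R(w)$ (the inclusion $R(w')\subseteq R(w)$ is transitivity, and $R(w)\subseteq R(w')$ is Euclideanness). Because the truth and falsity of $\blacksquare p$ at a state depend only on the values of $p$ on its successor set, $\blacksquare p$ then takes the same Belnapian value at $w$ and at every $w'\in R(w)$; when $\blacksquare p$ is true at $w$ this immediately gives $\blacksquare\blacksquare p$ at $w$, with the dead-end root case $R(w)=\varnothing$ being vacuous. Assembling the three parts — forward directions from these validity arguments, backward directions from the transitivity lemma together with Theorem~\ref{theorem:definability} — yields definability of $\mathbf{S4}$, $\mathbf{D4}$, and $\mathbf{45}$ in $\Lknow$.
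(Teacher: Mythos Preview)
Your proposal is correct and follows essentially the same approach as the paper: the same defining sequents, the same non-transitivity countermodel for the backward direction, and the same case split on whether $\blacksquare p$ is exactly true or both true and false (with seriality supplying $R(w')\neq\varnothing$) for the forward direction in $\mathbf{D4}$ and $\mathbf{S4}$. Your packaging of the $\mathbf{45}$ soundness argument via the observation $R(w')=R(w)$ for $w'\in R(w)$ is a slightly cleaner phrasing of what the paper does by invoking Euclideanness for $R(w')\neq\varnothing$ and transitivity for $R(w')\subseteq R(w)$ separately, but the underlying content is identical.
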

\begin{proof}~

\fbox{$\mathbf{45}$ frames}\\
We show that $\mathfrak{F}\in\mathbf{45}$ iff $\mathfrak{F}\models[\blacklozenge p\vdash\blacksquare\blacklozenge p]$ and $\mathfrak{F}\models[\blacksquare p\vdash\blacksquare\blacksquare p]$. Let $\mathfrak{F}$ be transitive and Euclidean. Then $\mathfrak{F}\models[\blacklozenge p\vdash\blacksquare\blacklozenge p]$ by the previous case and Theorem~\ref{theorem:knowisgood}. We show that $\mathfrak{F}\models[\blacksquare p\vdash\blacksquare\blacksquare p]$, as well. Assume that $\mathfrak{M},w\Vdash^+\blacksquare p$. Then, $\mathfrak{M},w'\Vdash^+p$ for every $w'\in R(w)$. In addition, if $\mathfrak{M},u\Vdash^-p$ in some $u\in R(w)$, then $\mathfrak{M},u'\Vdash^-p$ in all $u'\in R(w)$. Moreover, since $\mathfrak{F}$ is Euclidean, it holds that if $R(w)\neq\varnothing$, then $R(w')\neq\varnothing$ for all $w'\in R(w)$ as well. Of course, if $R(w)=\varnothing$, then $\mathfrak{M},w\Vdash^+\blacksquare\blacksquare p$. If $R(w)\neq\varnothing$, then (since $R$ is transitive) either $\mathfrak{M},w'\Vdash^+\blacksquare p$ and $\mathfrak{M},w'\nVdash^-\blacksquare p$ in every $w'\in R(w)$, or $\mathfrak{M},w'\Vdash^+\blacksquare p$ and $\mathfrak{M},w'\Vdash^-\blacksquare p$ in every $w'\in R(w)$. In both cases, $\mathfrak{M},w\Vdash^+\blacksquare\blacksquare p$, as required.

For the converse, let $\mathfrak{F}\notin\mathbf{45}$. If $\mathfrak{F}$ is not Euclidean, then $\mathfrak{F}\not\models[\blacklozenge p\vdash\blacksquare\blacklozenge p]$. So, we consider the case when $\mathfrak{F}$ is not transitive. Then, there are states $w_0$, $w_1$, and $w_2$ s.t.\ $w_0Rw_1Rw_2$ but $w_2\notin R(w_0)$. We set the valuation as follows: if $w\in R(w_0)$, then $w\in v^+(p)$ and $w\notin v^-(p)$; otherwise, $w\notin v^+(p)$ and $w\in v^-(p)$. It is clear that $\mathfrak{M},w_0\Vdash^+\blacksquare p$ but $\mathfrak{M},w_0\nVdash^+\blacksquare\blacksquare p$. Thus, $\mathfrak{F}\not\models[\blacksquare p\vdash\blacksquare\blacksquare p]$, as required.

\fbox{$\mathbf{D4}$ frames}\\
We show that $\mathfrak{F}\in\mathbf{D4}$ iff $\blacksquare p\vdash\blacklozenge p$ and $\blacksquare p\vdash\blacksquare\blacksquare p$ are valid on $\mathfrak{F}$. Let $\mathfrak{F}$ be serial and transitive. Then $\blacksquare p\vdash\blacklozenge p$ is valid on $\mathfrak{F}$ (Theorem~\ref{theorem:definability}). We show that $\blacksquare p\vdash\blacksquare\blacksquare p$ is valid as well. Let $\mathfrak{M},w\Vdash^+\blacksquare p$. Then either $\mathfrak{M},w'\Vdash^+p$ and $\mathfrak{M},w'\nVdash^-p$ in all $w'\in R(w)$ or $\mathfrak{M},w'\Vdash^+p$ and $\mathfrak{M},w'\Vdash^-p$ in all $w'\in R(w)$. In the first case, since $R$ is transitive, we have that $\mathfrak{M},w'\Vdash^+\blacksquare p$ and $\mathfrak{M},w'\nVdash^-\blacksquare p$ in all $w'\in R(w)$, whence $\mathfrak{M},w\Vdash^+\blacksquare\blacksquare p$. In the second case, observe that $R$ is not only transitive but serial as well (i.e., $R(w')\neq\varnothing$ for all $w'\in R(w)$). Thus, $\mathfrak{M},w'\Vdash^+\blacksquare p$ and $\mathfrak{M},w'\Vdash^-\blacksquare p$ in all $w'\in R(w)$, and again, $\mathfrak{M},w\Vdash^+\blacksquare\blacksquare p$, as required.

For the contrary, let $\mathfrak{F}\notin\mathbf{D4}$. If $\mathfrak{F}$ is not serial, then $\mathfrak{F}\not\models[\blacksquare p\vdash\blacklozenge p]$. If $\mathfrak{F}$ is not transitive, we can show that $\mathfrak{F}\not\models[\blacksquare p\vdash\blacksquare\blacksquare p]$ in the same way as in the previous case.

\fbox{$\mathbf{S4}$ frames}\\
We show that $\mathfrak{F}\in\mathbf{S4}$ iff $\blacksquare p\vdash p$ and $\blacksquare p\vdash\blacksquare\blacksquare p$ are valid on $\mathfrak{F}$. Again, since $\mathfrak{F}\in\mathbf{S4}$, $\blacksquare p\vdash p$ is valid on $\mathfrak{F}$ by Theorem~\ref{theorem:definability}. The validity of $\blacksquare p\vdash\blacksquare\blacksquare p$ can be checked in the same way as in the case of $\mathbf{D4}$ frames. The converse direction can also be shown in the same way as the converse direction for the case of $\mathbf{45}$ frames.
\end{proof}

Let us quickly recapitulate the results of this section. We established that the traditional epistemic ($\mathbf{S5}$) and doxastic ($\mathbf{45}$ and $\mathbf{D45}$\footnote{$\mathbf{D45}$ frames are definable since both $\mathbf{D}$ and $\mathbf{45}$ frames are definable.}) frames are definable in an expected way. In fact, $\mathbf{S4}$ can be (cf., e.g.,~\cite{Steinsvold2008}) viewed as a~logic of knowledge if one does not assume \emph{negative introspection}. In this case, $\mathbf{D4}$ can be considered a~doxastic logic (again, without the negative introspection). We have shown that both $\mathbf{D4}$ and $\mathbf{S4}$\footnote{We remind the readers again that $\mathbf{S4}$ frames are definable even with $\blacktriangle$ (knowledge whether) operator~\cite[Theorem~5.4]{KozhemiachenkoVashentseva2023}. The present result while not new is important because $\mathbf{S4}$ and $\mathbf{D4}$ frames are definable \emph{in a~standard manner}.} frames are definable.
\section{Conclusion\label{sec:conclusion}}
In this paper, we continued the line of research proposed in~\cite{KozhemiachenkoVashentseva2023} and provided expansions of the Belnap--Dunn logic (First-Degree entailment) with the knowledge ($\blacksquare$) and ignorance ($\mathbf{I}$) modalities. We presented and motivated their semantics and explored their properties and constructed a~sound and complete (Theorem~\ref{theorem:Tknowignocompleteness}) analytic cut calculus for $\knowBD$ and $\ignoBD$. Below, we summarise the main results of the paper.
\begin{itemize}
\item[$*$] The ignorance modality $\mathbf{I}$ satisfies the desiderata outlined in~\cite{KubyshkinaPetrolo2021}. Namely, the $\ignoBD$-counterparts of classical axioms and rules are $\ignoBD$ valid as shown in Theorem~\ref{theorem:ignoisgood}, and $\mathbf{I}$ can be defined neither via the standard $\Box$ operator nor via the knowledge operator $\blacksquare$ (Theorems~\ref{theorem:ignoknowundefinable} and~\ref{theorem:ignovsknow}).
\item[$*$] The introduced modality $\blacksquare$ cannot be defined via $\Box$ nor $\mathbf{I}$ (Theorems~\ref{theorem:ignoknowundefinable} and~\ref{theorem:ignovsknow}). Furthermore, it conforms to the usual requirements of knowledge modalities: $\blacksquare$ has the expected connection to the “knowledge whether” ($\blacktriangle$); truthfulness, positive, and negative introspection are valid on $\mathbf{S5}$ frames (Theorem~\ref{theorem:knowisgood}). Moreover, using $\blacksquare$, we defined an unknown truth operator $\bullet$ and showed that it is not definable via $\Box$ either (Theorem~\ref{theorem:accidenceundefinable}).
\item[$*$] Several important classes of epistemic and doxastic frames are definable using $\blacksquare$ in the same way as they are defined in classical logic up to the replacement of $\Box$ with $\blacksquare$ and the implication with~$\vdash$ (Theorems~\ref{theorem:definability} and~\ref{theorem:transitivedefinability}).
\end{itemize}

Several questions remain open. First of all, recall from Remark~\ref{rem:knownonstandard} that $\blacksquare$ behaves in a~non-standard manner. This non-standard behaviour could be rectified if we defined the validity of sequents not via the preservation of truth but via the preservation of truth and non-falsity. I.e., if we considered $\phi\vdash\chi$ to be valid when there is no such pointed model $\langle\mathfrak{M},w\rangle$ where $\phi$ is \emph{true and non-false} but $\chi$ is not. In this setting, $\blacksquare(p\wedge q)\vdash\blacksquare p\wedge\blacksquare q$ would be universally valid. This, however, would make our logic non-paraconsistent and (arguably, a~bigger issue) would render reasoning by cases~--- $\dfrac{\phi\vdash\psi\quad\chi\vdash\psi}{\phi\vee\chi\vdash\psi}$~--- unsound. In fact, such a~definition of validity would make our logic an extension of $\mathbf{ETL}$ (Exactly True Logic), and hence, we would lose the conservativity over $\BD$. $\mathbf{ETL}$ was introduced in~\cite{PietzRiveccio2013} and further studied in~\cite{ShramkoZaitsevBelikov2017,ShramkoZaitsevBelikov2018,KapsnerRivieccio2023}. We leave the analysis of $\blacksquare$ in $\mathbf{ETL}$ for future research.

Second, we have established the definability of several classes of frames in $\Lknow$. To the best of our knowledge, there are no results on the correspondence between the formulas with the classical ignorance modality $\mathbb{I}$ and classes of frames. Moreover, it is open which classes of frames \emph{are not definable} in $\Lknow$ and $\Ligno$.

Finally, many modal expansions of $\BD$ (cf., e.g.,~\cite{OdintsovWansing2010,OdintsovWansing2017}) contain an implication. Thus, considering a~counterpart of $\mathsf{BK}^\Box$\footnote{A logic expanding $\BD$ with $\Box$ and a four-valued “Nelsonian” implication.} but with $\blacksquare$ and $\mathbf{I}$ also makes sense. Moreover, adding an implication will allow for easier construction of Hilbert-style calculi and facilitate the study of the correspondence between classes of frames and formulas (both in the language with $\blacksquare$ and $\mathbf{I}$). In addition, the implication in $\mathsf{BK}^\Box$ is not the only possible one that we can add. Thus, it is also possible to compare different implicative expansions of $\ignoBD$ and $\knowBD$, especially because many important modal formulas contain multiple implications or do not have implication as their principal connective. Finally, complete axiomatisations of implicative expansions of $\knowBD$ and $\ignoBD$ will shed light on which formulas are valid on all frames.
\bibliographystyle{plain}
\bibliography{references}
\appendix
\section{Proof of Theorem~\ref{theorem:Tknowignocompleteness}\label{sec:completenessproof}}
\emph{For every $\phi\vdash\chi$ s.t.\ $\phi,\chi\in\Lknow\cup\Ligno$, it holds that $\phi\vdash\chi$ is valid iff it has a~$\Tknowigno$ proof.}
\begin{proof}
For the soundness part, one can check that if a~branch is realised by a~model, then its extension by any rule is realised too. Note also that a~closed branch clearly cannot be realised. Thus, if the tree is closed, then the initial labelled formulas are not realisable. But in order to prove $\phi\vdash\chi$, we start a~tree with $\{\phi\!:\!\ltrue;w_0,\chi\!:\!\lnontrue;w_0\}$. Hence, if this set cannot be realised, the sequent is valid.

For the completeness part, we proceed by contraposition. We show that every complete open branch $\mathcal{B}$ is realisable. Namely, we prove by induction on $\phi$ that $w\colon\phi;\ltrue\in\mathcal{B}$ iff $\mathfrak{M},w\Vdash^+\phi$ and $w\colon\phi;\lfalse\in\mathcal{B}$ iff $\mathfrak{M},w\Vdash^-\phi$ with $\mathfrak{M}$ as in Definition~\ref{def:branchrealisation}. The basis case of $\phi=p$ holds by the construction of $\mathfrak{M}$. The propositional cases are straightforward. Thus, we are going to consider only the most instructive cases of $\phi=\blacksquare\chi$ and $\phi=\mathbf{I}\chi$.

Let $w_i\colon\blacksquare\chi;\ltrue\in\mathcal{B}$. Since $\mathcal{B}$ is complete, we have that $w_j\colon\chi;\ltrue\in\mathcal{B}$ for every $w_j$ s.t.\ $w_i\mathsf{R}w_j\in\mathcal{B}$ (using $\blacksquare_\ltrue$). Moreover, if there is some $w'$ s.t.\ $w'\colon\chi;\lfalse\in\mathcal{B}$ and $w_i\mathsf{R}w'\in\mathcal{B}$, then $w_j\colon\chi;\lfalse$ for all $w_j$'s s.t.\ $w_i\mathsf{R}w_j\in\mathcal{B}$. By the induction hypothesis, we have that $\mathfrak{M},w_j\Vdash^+\chi$ for every $w_j\in R(w_i)$ and, moreover, if $\mathfrak{M},w'\Vdash^-\chi$ for some $w'\in R(w_i)$, then $\mathfrak{M},w_j\Vdash^-\chi$ for all $w_j\in R(w_i)$. Thus, $\mathfrak{M},w_i\Vdash^+\blacksquare\chi$, as required.

For the converse, assume that $w_i\!\colon\!\blacksquare\chi;\ltrue\!\notin\!\mathcal{B}$. Hence, by completeness of $\mathcal{B}$, we have that $w_i\!\colon\!\blacksquare\chi;\lnontrue\!\in\!\mathcal{B}$. Then, one of the following holds:
\begin{enumerate}
\item $w_j\colon\chi;\lnontrue\in\mathcal{B}$ for some $w_j$ s.t.\ $w_i\mathsf{R}w_j\in\mathcal{B}$;
\item $w_j\colon\chi;\lfalse\in\mathcal{B}$ and $w_k\colon\chi;\lnonfalse\in\mathcal{B}$ for some $w_j$ and $w_k$ s.t.\ $w_i\mathsf{R}w_j\in\mathcal{B}$ and $w_i\mathsf{R}w_k\in\mathcal{B}$.
\end{enumerate}
Applying the induction hypothesis, we have that $\mathfrak{M},w_j\nVdash^-\chi$ for some $w_j\in R(w_i)$ in the first case, and $\mathfrak{M},w_j\Vdash^-\chi$ and $\mathfrak{M},w_k\nVdash^-\chi$ for some $w_j,w_k\in R(w_i)$ in the second case. Thus, $\mathfrak{M},w_j\nVdash^+\blacksquare\chi$, as required.

The case when $w_i\colon\blacksquare\chi;\lfalse\in\mathcal{B}$ can be tackled in the same way.

Let us now proceed to the case when $\phi=\mathbf{I}\chi$. Again, we assume that $w_i\colon\mathbf{I}\chi;\ltrue\in\mathcal{B}$. Then, $w_i\chi;\ltrue\in\mathcal{B}$ and for each $w_j$ s.t.\ $w_i\mathsf{R}w_j$\footnote{Observe that $w\mathsf{R}w$, actually, does not occur in $\Tknowigno$ proofs since whenever $w\mathsf{R}w'$ is introduced, $w'$ is fresh.}, $w_j\colon\chi;\lfalse\in\mathcal{B}$. Moreover, if $w'\colon\chi;\ltrue\in\mathcal{B}$ for some $w'$ s.t.\ $w_i\mathsf{R}w'\in\mathcal{B}$, then $w_j\colon\chi;\ltrue\in\mathcal{B}$ for every $w_j$ s.t.\ $w_i\mathsf{R}w_j$. By the induction hypothesis, we obtain that $\mathfrak{M},w_i\Vdash^+\chi$, $\mathfrak{M},w_j\Vdash^-\chi$ for all $w_j\in R^!(w_i)$, and if $\mathfrak{M},w'\Vdash^+\chi$ for some $w'\in R^!(w_i)$, then $\mathfrak{M},w_j\Vdash^+\chi$ for all $w_j\in R^!(w_i)$ as well. Hence, $\mathfrak{M},w_i\Vdash^+\mathbf{I}\chi$.

For the converse, assume that $w_i\colon\mathbf{I}\chi;\ltrue\notin\mathcal{B}$ (hence, $w_i\colon\mathbf{I}\chi;\lnontrue\in\mathcal{B}$). Then, one of the following holds:
\begin{enumerate}
\item $w_i\colon\chi;\lnontrue\in\mathcal{B}$;
\item $w_j\colon\chi;\lnonfalse\in\mathcal{B}$ for some $w_j$ s.t.\ $w_i\mathsf{R}w_j\in\mathcal{B}$;
\item $w_j\colon\chi;\ltrue\in\mathcal{B}$ and $w_k\colon\chi;\lnontrue\in\mathcal{B}$ for some $w_j$ and $w_k$ s.t.\ $w_i\mathsf{R}w_j\in\mathcal{B}$ and $w_i\mathsf{R}w_k\in\mathcal{B}$.
\end{enumerate}
By the induction hypothesis, we have that $\mathfrak{M},w_i\nVdash^+\chi$ in the first case; $\mathfrak{M},w_j\nVdash^-\chi$ for some $w_j\in R^!(w_i)$\footnote{Again, observe that the models produced from the complete open branches of $\Tknowigno$ proofs are \emph{irreflexive}, whence $R^!(w)=R(w)$ for all $w$'s.} in the second case; $\mathfrak{M},w_j\Vdash^+\chi$ and $\mathfrak{M},w_k\nVdash^+\chi$ for some $w_j,w_k\in R^!(w_i)$ in the third case. In all three cases, $\mathfrak{M},w_i\nVdash^+\mathbf{I}\chi$. The case of $w_i\colon\mathbf{I}\chi;\lfalse\in\mathcal{B}$ can be dealt with similarly.
\end{proof}
\section{Proof of Theorem~\ref{theorem:accidenceundefinable}\label{sec:accidenceproof}}
\emph{There is no formula $\phi\in\LBox$ that defines $\bullet p$ on the classes of all frames, all reflexive frames, all transitive frames, all symmetric frames, and all Euclidean frames.}
\begin{proof}
Consider the models in Fig.~\ref{fig:S4counterexample}.
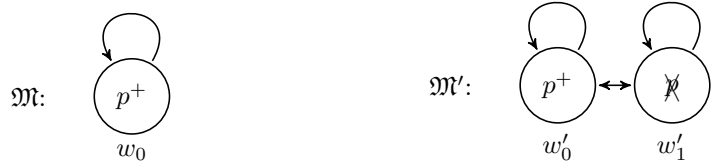
\begin{figure}
\centering
\begin{tikzpicture}[modal,node distance=0.5cm,world/.append style={minimum
size=1cm}]
\node[world] (w0) [label=below:{$w_0$}] {$p^+$};
\node[] [left=of w0] {$\mathfrak{M}$:};
\path[->] (w0) edge[reflexive] (w0);
\end{tikzpicture}
\hfil
\begin{tikzpicture}[modal,node distance=0.5cm,world/.append style={minimum
size=1cm}]
\node[world] (w0) [label=below:{$w'_0$}] {$p^+$};
\node[world] (w1) [right=of w0] [label=below:{$w'_1$}] {$\xcancel{p}$};
\node[] [left=of w0] {$\mathfrak{M}'$:};
\path[->] (w0) edge[reflexive] (w0);
\path[->] (w1) edge[reflexive] (w1);
\path[<->] (w0) edge (w1);
\end{tikzpicture}
\caption{All variables in both models have the same values exemplified by $p$.}
\label{fig:S4counterexample}
\end{figure}
Clearly, $\mathfrak{M},w_0\nVdash^+\bullet p$ and $\mathfrak{M},w_0\Vdash^-\bullet p$ but $\mathfrak{M}',w'_0\Vdash^+\bullet p$ and $\mathfrak{M}',w'_0\nVdash^-\bullet p$. It now suffices to show for any $\phi\in\LBox$ that
\begin{enumerate}
\item if $\mathfrak{M}',w'_0\nVdash^+\phi$ and $\mathfrak{M}',w'_0\Vdash^-\phi$, then $\mathfrak{M},w_0\nVdash^+\phi$ and $\mathfrak{M},w_0\Vdash^-\phi$; and
\item if $\mathfrak{M}',w'_0\Vdash^+\phi$ and $\mathfrak{M}',w'_0\nVdash^-\phi$, then $\mathfrak{M},w_0\Vdash^+\phi$ and $\mathfrak{M},w_0\nVdash^-\phi$.
\end{enumerate}

We proceed by induction on $\phi$. The basis case of variables is trivial since their valuations at $w_0$ and $w'_0$ are the same. We only show the most instructive cases of $\phi=\psi\wedge\psi'$ and $\phi=\lozenge\psi$ (recall that $\lozenge\psi$ can be defined as $\neg\Box\neg\psi$).

\fbox{$\phi=\psi\wedge\psi'$}

For (1), if $\mathfrak{M}',w'_0\nVdash^+\psi\wedge\psi'$ and $\mathfrak{M}',w'_0\Vdash^-\psi\wedge\psi'$, then
\begin{enumerate}
\item[(a)] $\mathfrak{M}',w'_0\nVdash^+\psi$ and $\mathfrak{M}',w'_0\Vdash^-\psi$, or
\item[(b)] $\mathfrak{M}',w'_0\nVdash^+\psi'$ and $\mathfrak{M}',w'_0\Vdash^-\psi'$, or
\item[(c)] w.l.o.g. $\mathfrak{M}',w'_0\Vdash^+\psi$ and $\mathfrak{M}',w'_0\Vdash^-\psi$ but $\mathfrak{M}',w'_0\nVdash^+\psi'$ and $\mathfrak{M}',w'_0\nVdash^-\psi'$.
\end{enumerate}

Cases (a) and (b) hold by the induction hypothesis. For (c), one can show by induction that there is no $\phi\in\LBox$ s.t.\ $\mathfrak{M}',w'_0\Vdash^+\phi$ and $\mathfrak{M}',w'_0\Vdash^-\phi$. The basis case of propositional variables holds by the construction of the model, and the cases of propositional connectives can be shown by a~straightforward application of the induction hypothesis. Finally, one can see that there is no $\chi\in\LBox$ s.t.\ $\mathfrak{M}',w'_1\Vdash^+\chi$ and $\mathfrak{M}',w'_1\Vdash^-\chi$. Thus, it holds that there is no $\phi=\Box\chi$, nor $\phi=\lozenge\chi$ s.t.\ $\mathfrak{M}',w'_0\Vdash^+\phi$ and $\mathfrak{M}',w'_0\Vdash^-\phi$.

For (2), recall that if $\mathfrak{M}',w'_0\Vdash^+\psi\wedge\psi'$ and $\mathfrak{M}',w'_0\nVdash^-\psi\wedge\psi'$, then $\mathfrak{M}',w'_0\Vdash^+\psi$ and $\mathfrak{M}',w'_0\nVdash^-\psi$ as well as $\mathfrak{M}',w'_0\Vdash^+\psi'$ and $\mathfrak{M}',w'_0\nVdash^-\psi'$. Hence, by the induction hypothesis, $\mathfrak{M},w_0\Vdash^+\psi$ and $\mathfrak{M},w_0\nVdash^-\psi$ as well as $\mathfrak{M},w_0\Vdash^+\psi'$ and $\mathfrak{M},w_0\nVdash^-\psi'$. Thus, $\mathfrak{M}',w'_0\Vdash^+\psi\wedge\psi'$ and $\mathfrak{M}',w'_0\nVdash^-\psi\wedge\psi'$, as required.

\fbox{$\phi=\lozenge\psi$}

Observe first, that since $\forall w',w''\!\in\!\mathfrak{M}':w'Rw''$, it holds that
\begin{itemize}
\item $\mathfrak{M}',w'\Vdash^+\lozenge\psi$ iff $\mathfrak{M}',w''\Vdash^+\lozenge\psi$, and
\item $\mathfrak{M}',w'\Vdash^-\lozenge\psi$ iff $\mathfrak{M}',w''\Vdash^-\lozenge\psi$
\end{itemize}
for any $w',w''\in\mathfrak{M}'$ and any $\lozenge\psi\in\LBox$.

Now, consider (1). If $\mathfrak{M}',w'_0\nVdash^+\lozenge\psi$ and $\mathfrak{M}',w'_0\Vdash^-\lozenge\psi$, then $\mathfrak{M}',w'\nVdash^+\psi$ and $\mathfrak{M}',w'\Vdash^-\psi$ for any $w'\in\{w'_0,w'_1\}$. But then, $\mathfrak{M},w_0\nVdash^+\psi$ and $\mathfrak{M},w_0\Vdash^-\psi$ by the induction hypothesis. Hence, $\mathfrak{M},w_0\nVdash^+\lozenge\psi$ and $\mathfrak{M},w_0\Vdash^-\lozenge\psi$, as required.

For (2), let $\mathfrak{M}',w'_0\Vdash^+\lozenge\psi$ and $\mathfrak{M}',w'_0\nVdash^-\lozenge\psi$. We have four options:
\begin{enumerate}
\item[(a)] $\mathfrak{M}',w'_0\Vdash^+\psi$ and $\mathfrak{M}',w'_0\nVdash^-\psi$, or
\item[(b)] $\mathfrak{M}',w'_1\Vdash^+\psi$ and $\mathfrak{M}',w'_1\nVdash^-\psi$, or
\item[(c)] $\mathfrak{M}',w'_1\Vdash^+\psi$, $\mathfrak{M}',w'_1\Vdash^-\psi$, $\mathfrak{M}',w'_0\nVdash^+\psi$ and $\mathfrak{M}',w'_0\nVdash^-\psi$, or
\item[(d)] $\mathfrak{M}',w'_0\Vdash^+\psi$, $\mathfrak{M}',w'_0\Vdash^-\psi$, $\mathfrak{M}',w'_1\nVdash^+\psi$ and $\mathfrak{M}',w'_1\nVdash^-\psi$.
\end{enumerate}

The (a) case holds by the induction hypothesis. Case (d) holds trivially since there is no $\phi\in\LBox$ s.t.\ $\mathfrak{M}',w'_0\Vdash^+\phi$ and $\mathfrak{M}',w'_0\Vdash^-\phi$ as we have just shown. Case (c) holds trivially as well because a~similar inductive argument demonstrates that there is no $\phi\in\LBox$ s.t.\ $\mathfrak{M}',w'_1\Vdash^+\phi$ and $\mathfrak{M}',w'_1\Vdash^-\phi$.

Finally, we reduce (b) to (a) by proving that for any $\psi\in\LBox$, it holds that (i) if $\mathfrak{M}',w'_1\Vdash^+\psi$ and $\mathfrak{M}',w'_1\nVdash^-\psi$, then $\mathfrak{M}',w'_0\Vdash^+\psi$ and $\mathfrak{M}',w'_0\nVdash^-\psi$ as well, and that (ii) if $\mathfrak{M}',w'_1\nVdash^+\psi$ and $\mathfrak{M}',w'_1\Vdash^-\psi$, then $\mathfrak{M}',w'_0\nVdash^+\psi$ and $\mathfrak{M}',w'_0\Vdash^-\psi$. We proceed by induction on $\psi$.

The basis case of a~propositional variable holds trivially. The propositional cases are also straightforward. Now, if $\psi=\lozenge\chi$, we use the observation above to obtain that (i) and (ii) hold as well.

The result follows.
\end{proof}
\end{document}